\documentclass[11pt]{amsart}
\usepackage{amssymb,amsthm,amsmath,amstext}
\usepackage{mathrsfs}  
\usepackage{mathtools} 
\usepackage[usenames,dvipsnames,svgnames,table]{xcolor}
\usepackage{graphicx}
\usepackage[all]{xy}

\usepackage[left=1.5in,top=1.5in,right=1.5in,bottom=1.5in]{geometry}

\numberwithin{equation}{section}

\theoremstyle{plain}
\newtheorem{prop}{Proposition}
\newtheorem{theo}[prop]{Theorem}

\newtheorem{lemm}[prop]{Lemma}

\newtheorem{theoremintro}{Theorem}

\theoremstyle{definition}
\newtheorem{defi}[prop]{Definition}

\newtheorem{rema}[prop]{Remark}

\DeclareMathOperator{\Pic}{\mathrm{Pic}}
\DeclareMathOperator{\Br}{\mathrm{Br}}
\DeclareMathOperator{\Brur}{\mathrm{Br}_{\ur}}

\newcommand{\CH}{\mathrm{CH}}
\newcommand{\Spec}{\mathrm{Spec}}
\newcommand{\im}{\mathrm{im}}
\newcommand{\cor}{\mathrm{cor}}

\newcommand{\isom}{\cong}
\newcommand{\simto}{\mathrel{\vcenter{\offinterlineskip\hbox{\hskip.4ex$\sim$}\vskip-0.7ex\hbox{$\xrightarrow{~~\;}$}}}}
\newcommand{\isomto}{\simto}

\newcommand{\Z}{\mathbb Z}
\newcommand{\A}{\mathbb A}
\newcommand{\C}{\mathbb C}
\renewcommand{\P}{\mathbb P}
\newcommand{\Q}{\mathbb Q}
\newcommand{\Gm}{\mathbb{G}_{\mathrm{m}}}
\newcommand{\SL}{\mathrm{SL}}

\newcommand{\sep}{^{{s}}}

\newcommand{\sing}{^{\mathrm{sing}}}

\newcommand{\dual}{^{\vee}}
\newcommand{\tensor}{\otimes}

\newcommand{\mapto}[1]{\xrightarrow{#1}}

\newcommand{\et}{\mathrm{\acute{e}t}}
\newcommand{\Het}{H_{\et}}
\newcommand{\ur}{\mathrm{nr}}
\newcommand{\Hur}{H_{\ur}}
\newcommand{\Hnr}{H_{\ur}}

\newcommand{\HH}{\mathcal{H}}
\newcommand{\KK}{\mathcal{K}}
\newcommand{\OO}{\mathscr{O}}

\newcommand{\linedef}[1]{\textsl{#1}}

\usepackage[backref=page]{hyperref}
\hypersetup{pdftitle={Stable rationality of quadric and cubic surface bundle fourfolds}}
\hypersetup{pdfauthor={Asher Auel, Christian Boehning, Alena Pirutka}}
\hypersetup{colorlinks=true,linkcolor=blue,anchorcolor=blue,citecolor=blue}

\begin{document}

\title[Stable rationality of quadric and cubic surface bundles]{Stable
rationality of quadric and cubic surface bundle fourfolds}

\author[Auel]{Asher Auel}
\address{Asher Auel, Department of Mathematics\\
Yale University\\
New Haven, Connecticut 06511, United States}
\email{asher.auel@yale.edu}

\author[B\"ohning]{Christian B\"ohning}
\address{Christian B\"ohning, Mathematics Institute, University of Warwick\\
Coventry CV4 7AL, England}
\email{C.Boehning@warwick.ac.uk}

\author[Pirutka]{Alena Pirutka}
\address{Alena Pirutka, Courant Institute of Mathematical Sciences,
New York University\\ 
New York, New York 10012, United States\newline
National Research University Higher School of Economics, Russian Federation}
\email{pirutka@cims.nyu.edu}

\maketitle

\begin{abstract}
We study the stable rationality problem for quadric and cubic surface
bundles over surfaces from the point of view of the degeneration
method for the Chow group of $0$-cycles.  Our main result is that a
very general hypersurface $X$ of bidegree $(2,3)$ in $\P^2 \times
\P^3$ is not stably rational.  Via projections onto the two factors,
$X\to \P^2$ is a cubic surface bundle and $X\to \P^3$ is a conic
bundle, and we analyze the stable rationality problem from both these
points of view.  Also, we introduce, for any $n\geq 4$, new quadric
surface bundle fourfolds $X_n \to \P^2$ with discriminant curve $D_n
\subset \P^2$ of degree $2n$, such that $X_n$ has nontrivial
unramified Brauer group and admits a universally $\CH_0$-trivial
resolution.
\end{abstract}

\vspace{.6cm}

\section{Introduction}

An integral variety $X$ over a field $k$ is {\it stably rational} if
$X\times \P^m$ is rational, for some $m$.  In recent years,
failure of stable rationality has been established for many classes of
smooth rationally connected projective complex varieties, see, for
instance \cite{Aokada, auel-co, beau-6, bohboth, CTP-cyclic, CTP,
HKT-conic, HPT, HPT-double, HPT-quad, HT-fano,
krylov_okada:del_Pezzo_fibrations_low_degree, okada,
okada:weighted_hypersurfaces, okada:index_one_Fano, Sch2, Sch1,
totaro-JAMS, voisin}. These results were obtained by the
specialization method, introduced by C.\ Voisin \cite{voisin} and
developed in \cite{CTP}.  In many applications, one uses this method
in the following form. For simplicity, assume that $k$ is an
uncountable algebraically closed field and consider a quasi-projective
integral scheme $B$ over $k$ and a generically smooth projective
morphism $\mathcal X \to B$ with positive dimensional fibers.  In
order to prove that a very general fiber $\mathcal X_b$ of this family
is not stably rational it suffices to exhibit a single integral fiber
$Y=\mathcal X_{b_0}$, typically singular, such that:
\begin{itemize}
\item[(R)] $Y$ admits a universally $\CH_0$-trivial resolution $\tilde
Y \to Y$ of singularities,

\item[(O)] $\tilde Y$ is not universally $\CH_0$-trivial, e.g., the
function field $k(Y)$ admits a nontrivial \'{e}tale unramified
invariant such as $\Hnr^2(k(Y)/k, \Q/\Z(1))$.
\end{itemize}
We call such $Y$ a ``reference variety,'' see Definition
\ref{def:ref_var}.

Recall \cite{ACTP:unramified_cohomology_cubic_fourfold}, \cite{CTP}
that a proper variety $X$ over $k$ is \linedef{universally
$\CH_0$-trivial} if for every field extension $k'/k$, the degree
homomorphism on the Chow group of 0-cycles $\CH_0(X_{k'})\rightarrow
\Z$ is an isomorphism.  A proper morphism $f: \tilde X\rightarrow X$
of $k$-varieties is \linedef{universally $\CH_0$-trivial} if for every
field extension $k'/k$, the push-forward homomorphism $f_* :
\CH_0(\tilde {X}_{k'})\rightarrow \CH_0(X_{k'})$ is an isomorphism.
Then a \linedef{universally $\CH_0$-trivial resolution} of $X$ is a
proper birational universally $\CH_0$-trivial morphism $f:\tilde X\to
X$ with $\tilde X$ smooth.

In \cite{HPT}, the specialization method was applied to show that a
very general hypersurface of bidegree $(2,2)$ in $\P^2\times \P^3$ is
not stably rational over $\C$, utilizing the following reference variety:
\begin{equation}\label{refHPT}
Y \;\;:\;\; yzs^2+xzt^2+xyu^2+(x^2+y^2+z^2-2xy-2xz-2yz)v^2=0.
\end{equation}
Such hypersurfaces have the structure of a quadric surface bundle over
$\P^2$, by projection to the first factor, which inform the shape of
the equation for the reference variety $Y$ as in
\cite{pirutka-survol}. In \cite{HPT}, it was also shown that the locus,
in the Hilbert scheme of all hypersurfaces of bidegree $(2,2)$ in
$\P^2\times \P^3$, where the quadric bundle admits a rational section,
is dense (for the complex topology).  This provided a first example
showing that rationality is not a deformation invariant for smooth
projective complex varieties.  A detailed analysis of this same
reference variety, from the point of view of the conic bundle
structure obtained by projection onto the second factor, is made in
\cite{auel-co}.

Recently, Schreieder~\cite{Sch2} developed a refinement of the
specialization method, relaxing the condition that the reference
variety admits a universally $\CH_0$-trivial resolution.  This helped
establish the failure of stable rationality for many families of
quadric bundles \cite{Sch2}, and in particular a large class of
quadric surface bundles over $\P^2$ of graded free type \cite{Sch1}.



In a different direction, recent results of Ahmadinezhad and
Okada~\cite{Aokada} imply the failure of stable rationality for many
families of conic bundles over projective space, including a very
general hypersurface of bidegree $(2,d)$ in $\P^2 \times \P^3$ for $d
\geq 4$.  In this work, the specialization method is also used, where
the reference varieties constructed have global differential forms in
characteristic $p$, following the method of Koll\'ar developed by
Totaro~\cite{totaro-JAMS}.

The goal of this note is threefold.  First, we complete the stable
rationality analysis for hypersurfaces of bidegree $(2,d)$ in $\P^2
\times \P^3$.

\begin{theoremintro} 
\label{thm:main_intro} 
The very general hypersurface of bidegree $(2,3)$ in $\P^2 \times
\P^3$ over $\C$ is not stably rational.
\end{theoremintro}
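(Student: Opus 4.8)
The plan is to apply the degeneration method in the form recalled above: by \cite{voisin,CTP} it suffices to produce a single \emph{integral} hypersurface $Y \subset \P^2 \times \P^3$ of bidegree $(2,3)$ which is a reference variety in the sense of Definition~\ref{def:ref_var}, i.e. which satisfies conditions (R) and (O). I will look for such a $Y$ among the degenerate members of the family, and construct it so as to exploit the conic bundle structure $p\colon Y \to \P^3$ given by projection to the second factor: the defining equation is a ternary quadratic form $\sum_{0\le i\le j\le 2} c_{ij}\, x_i x_j$ in the coordinates $(x_0:x_1:x_2)$ of $\P^2$, whose coefficients $c_{ij}$ are cubic forms in the coordinates of $\P^3$, and the discriminant is the degree-$9$ surface $D = \{\det(c_{ij}) = 0\} \subset \P^3$.

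For the construction of $Y$ the idea is to specialize the symmetric matrix $(c_{ij})$ so that the conic bundle acquires the Artin--Mumford shape underlying \cite{HPT} and \cite{auel-co}: one picks the $c_{ij}$ to be suitable products of linear and quadratic forms so that $D$ degenerates to a union of surfaces in sufficiently general position, and so that the Brauer class $\alpha \in \Br \C(\P^3)[2]$ of the generic fibre of $p$ — a quaternion symbol $(f,g)$ read off from $(c_{ij})$ — has prescribed ramification. A convenient way to certify nontriviality is to restrict $p$ over a general plane $H \cong \P^2 \subset \P^3$: the restriction $p^{-1}(H) \to H$ is a conic bundle over $\P^2$ with reducible discriminant curve, the configuration being chosen so that this is, birationally, an Artin--Mumford-type conic bundle with $\Hnr^2 \ne 0$; since the relevant class extends to $Y$ without new ramification, this forces $\Hnr^2(\C(Y)/\C,\Q/\Z(1)) \ne 0$, which is condition~(O). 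Alternatively one runs the analysis through the cubic surface bundle $Y \to \P^2$, degenerating it so that the generic cubic surface contains a line and hence carries its own conic bundle structure over a Hirzebruch surface; this is the second point of view promised in the abstract and should produce the same obstruction.

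For condition~(R) the plan is the standard one: the singular locus of $Y$ lies over the singular locus of $D$, precisely where the rank of $(c_{ij})$ drops to $1$ and the conic fibre degenerates to a double line. Choosing the configuration so that these degenerations are as mild as possible — the fourfold $Y$ having at worst the expected singularities along the preimages of the nodes and of the double curves of $D$ — one resolves $Y$ by blowing up its singular locus, and checks that $\tilde Y \to Y$ has universally $\CH_0$-trivial fibres: over smooth points of $D$ the fibre is a point, over the rank-$1$ locus it is a $\P^1$, and over the deepest strata it is a rational surface or a tree of rational surfaces glued along rational curves, hence in all cases a universally $\CH_0$-trivial scheme. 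The fibrewise criterion of \cite{CTP} then yields a universally $\CH_0$-trivial resolution, which is~(R). With $Y$ satisfying (R) and (O), the degeneration method applies verbatim and shows that the very general bidegree-$(2,3)$ hypersurface in $\P^2 \times \P^3$ over $\C$ is not stably rational.

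The hard part is the tension between (R) and (O): condition (O) pushes toward a degenerate, reducible discriminant that supports a nontrivial Brauer obstruction, while condition (R) requires the accompanying singularities of the fourfold to stay mild enough to admit a $\CH_0$-trivial resolution. Threading this needle — pinning down an explicit symmetric matrix of cubics whose discriminant and rank-drop loci simultaneously give a computable nonzero $\Hnr^2$ and controllable singularities — together with the residue computation certifying the nonvanishing, is where the real work lies.
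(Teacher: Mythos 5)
Your proposal is a strategy outline rather than a proof: it recalls the specialization framework of \cite{voisin}, \cite{CTP} and conditions (R) and (O), but it never produces the reference variety. You acknowledge this yourself in the last paragraph (``pinning down an explicit symmetric matrix of cubics \dots is where the real work lies''), and that is precisely the content of the theorem's proof; everything you write before that point is the generic recipe that was already available for the bidegree $(2,2)$ case. The paper has to do genuinely new work here, and it does it twice: first by degenerating to a \emph{reducible} hypersurface $Y_1 \cup (\P^2 \times H)$, with $Y_1$ the known $(2,2)$ reference variety \eqref{refHPT} and $H$ the \emph{special} plane spanned by the common triangle of lines of the two Cayley cubic components of the discriminant, so that $Y_1 \cap (\P^2\times H)$ is rational and admits a universally $\CH_0$-trivial resolution (Propositions~\ref{prop:reducible} and \ref{prop:resolution_HPT_section}); and second by exhibiting an explicit irreducible $(2,3)$ hypersurface \eqref{bd23} birational to a quadric surface bundle over $\P^2$ with polygonal discriminant, for which (O) follows from Theorem~\ref{obs} and (R) from Proposition~\ref{birtransform} and the chart-by-chart resolution in Theorem~\ref{resolution}. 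You consider neither a reducible degeneration nor any concrete equation, so the gap is not a technicality: the construction is the theorem.

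Two of the steps you do sketch are also shaky as stated. For (O), you propose to certify nontriviality by restricting the conic bundle $Y\to\P^3$ over a general plane $H$ and claiming the class ``extends to $Y$ without new ramification.'' Nonvanishing of $\Hnr^2$ for the threefold $Y|_H$ does not by itself give nonvanishing for the fourfold: one must check the residues of the class along \emph{all} divisors of $Y$ (this is exactly what Theorem~\ref{obs} via \cite[Thm.~3.17]{pirutka-survol}, or the analysis of \cite{auel-co}, accomplishes), and the claim of ``no new ramification'' is the entire computation, which depends on an explicit choice you have not made. Note also that in the paper's first proof the restriction over a \emph{general} plane having nontrivial unramified Brauer group is an obstacle, not a help: it forces the special choice of $H$ so that the intersection component is universally $\CH_0$-trivial. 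For (R), your assertion that the singularities are ``as mild as possible'' with fibres that are points, $\P^1$'s, or trees of rational surfaces is exactly what must be verified and is false for careless choices; in the paper this verification occupies Proposition~\ref{prop:resolution_HPT_section} in the first proof and the entire appendix (Theorem~\ref{resolution}) plus the blow-up analysis of Proposition~\ref{birtransform} in the second. As it stands, the proposal does not prove the theorem.
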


It is easy to produce loci of bidegree $(2,3)$ hypersurfaces in $\P^2
\times \P^3$ that are rational, giving another example of a family of
smooth rationally connected fourfolds with some fibers rational but
most not stably rational, see Remark~\ref{rem:somerational}.

We provide two different proofs.  Our first proof, in \S\ref{sec:23},
uses a method, going back to Totaro~\cite{totaro-JAMS} for
hypersurfaces in projective space, for reducing the case of
hypersurfaces of bidegree $(2,d+1)$ in $\P^n \times \P^m$ to those of
bidegree $(2,d)$, by constructing reference varieties that are
reducible, see also \cite{bohboth}.  This method only works in general
when $m=2$, but with some additional geometric construction relying
heavily on the analysis in \cite{auel-co}, we are able to handle the
case $m=3$.  We remark that hypersurfaces of bidegree $(2,3)$ in $\P^2
\times \P^3$ have the structure of a conic bundle over $\P^3$ by
projection onto the second factor, and a cubic surface bundle over
$\P^2$ by projection onto the first factor.  For our second proof, in
\S\ref{sec:cubic_example}, we construct a new reference hypersurface
of bidegree $(2,3)$ in $\P^2 \times \P^3$ such that the associated
cubic surface bundle is generically smooth as opposed to in our first
construction.

Our second goal, in \S\ref{sec:cubic}, is to more generally study
cubic surface bundles over a rational surface, with an aim toward
investigating properties of the discriminant curve (which, in the case
of a cubic surface bundle, arises from the work of
Salmon~\cite{salmon:discriminant},
Clebsch~\cite{clebsch:quaternary_cubic},
\cite{clebsch:quaternary_cubic_II}, and clarified by
Edge~\cite{edge:discriminant_cubic_surface}) and the ramification
profile, in the spirit of the quadric surface bundle case in
\cite{pirutka-survol}.
This analysis provides 
an example of a smooth cubic surface $X$ over $K=\C(x,y)$ such that
the cokernel of the natural map $\Br(K)\to \Br(X)$ contains a
nontrivial 2-torsion Brauer class that is unramified on any fourfold
model of $X$, see Remark~\ref{rem:nonconstant}.

Finally, our third goal, in \S\ref{sec:poly}, is to provide a new
family of reference fourfolds $X_n \to \P^2$, whose generic fiber is a
diagonal quadric, and whose discriminant curve of degree $2n \geq 8$
consists of an $(n-1)$-gon of double lines with an inscribed conic.
These are a generalization of \eqref{refHPT} (which occurs as $X_4$).
The varieties in this family have nontrivial unramified Brauer group
by an application of the general formula in \cite{pirutka-survol}.
Like for the family of reference varieties constructed by
Schreieder~\cite{Sch1}, the morphism $X_n \to \P^2$ need not be flat,
however, in contrast, each reference variety in our family admits a
$\CH_0$-universally trivial resolution.  As a corollary, we obtain
particular cases of \cite[Theorem 1]{Sch1} using the specialization
method.





\medskip

\paragraph{\bf Acknowledgements} 
The authors would like to thank the organizers of the Simons
Foundation conference on Birational Geometry held in New York City,
August 21--25, 2017, as well as the Laboratory of Mirror Symmetry and
Automorphic forms, HSE, Moscow, where some of this work was
accomplished. The first author was partially supported by NSA Young
Investigator Grant H98230-16-1-0321. The third author was partially
supported by NSF grant DMS-1601680 and by the Laboratory of Mirror
Symmetry NRU HSE, RF Government grant, ag.\ no.\ 14.641.31.0001. The
authors would like to thank Jean-Louis Colliot-Th\'el\`ene, Bjorn
Poonen, Stefan Schreieder, and Yuri Tschinkel for helpful comments.

\section{Hypersurfaces of bidegree $(2,3)$ in $\P^2 \times \P^3$}
\label{sec:23}

In this section we study hypersurfaces $X$ of bidegree $(2,3)$ in
$\P^2 \times \P^3$ over $\C$.  Via projection onto the two factors, $X
\to \P^2$ is a cubic surface bundle and $X \to \P^3$ is a conic
bundle.  We point out that of the recent results \cite{Aokada},
\cite{bohboth}, \cite{krylov_okada:del_Pezzo_fibrations_low_degree},
\cite{Sch1}, \cite{Sch2} on stable rationality relevant to this case
(e.g., for conic bundles), none actually cover the case of bidegree
$(2,3)$ in $\P^2 \times \P^3$.

\begin{defi}
\label{def:ref_var}
We call a proper variety $Y$ over an algebraically closed field $k$ a
\linedef{reference variety} if for any discrete valuation ring $A$
with fraction field $K$ and residue field $k$ and for any flat and
proper scheme $\mathcal X$ over $A$ with smooth connected generic
fiber $X$ and special fiber $Y$, the $K\sep$-scheme $X_{K\sep}$ is not
universally $\CH_0$-trivial.
\end{defi}

For example, if $Y$ is integral and satisfies conditions (R) and (O)
from the Introduction, then $Y$ is a reference variety by \cite{CTP}.
More generally, examples of reducible reference varieties were
utilized in \cite{totaro-JAMS}.  We recall sufficient conditions
for a reducible variety to be a reference variety.

\begin{prop}
\label{prop:reducible}
Let $Y$ be a proper scheme with two irreducible components $Y_1$ and
$Y_2$ over an algebraically closed field $k$ such that $Y_1 \cap Y_2$
is irreducible.  If $Y_1 \cap Y_2$ is universally
$\CH_0$-trivial and $Y_1$ admits a universally
$\CH_0$-trivial resolution ${f:Y_1' \to Y_1}$ such that $Y_1'$ is not
universally $\CH_0$-trivial, then $Y$ is a reference variety.
\end{prop}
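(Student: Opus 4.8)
The plan is to verify the defining property of a reference variety directly: given a discrete valuation ring $A$ with fraction field $K$ and residue field $k$, and a flat proper $\mathcal{X}/A$ with smooth connected generic fiber $X$ and special fiber $Y = Y_1 \cup Y_2$, we must show $X_{K\sep}$ is not universally $\CH_0$-trivial. I would argue by contradiction: suppose $X_{K\sep}$ is universally $\CH_0$-trivial. The engine of the argument is the specialization map for Chow groups of $0$-cycles associated to a regular (or at least $\CH_0$-controlled) model, as developed in \cite{CTP}: after passing to a suitable extension of $A$ and using a resolution of $\mathcal{X}$ that is an isomorphism over the generic fiber, one obtains that universal $\CH_0$-triviality of $X$ forces the special fiber $Y$ to be universally $\CH_0$-trivial \emph{as a scheme}, i.e. $\CH_0((Y)_{k'}) \to \Z$ is an isomorphism for all $k'/k$. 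So the real content is to show that the hypotheses on $Y_1$, $Y_2$, and $Y_1 \cap Y_2$ are incompatible with $Y$ being universally $\CH_0$-trivial, and in fact to run the degeneration argument cleanly through the reducible (possibly singular) $Y$ one must build a universally $\CH_0$-trivial resolution — or at least a $\CH_0$-controlled modification — of the \emph{total space} $\mathcal{X}$, which is where the hypotheses get used.

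Concretely, I would proceed as follows. First, reduce to $A$ strictly henselian (or even complete) with algebraically closed residue field, which is harmless for the statement. Next, take the given resolution $f : Y_1' \to Y_1$ and glue: set $\wt{Y} = Y_1' \cup_{(Y_1 \cap Y_2)'} Y_2$ — but here one must be careful, since $f$ need not be an isomorphism over $Y_1 \cap Y_2$. Instead, the cleaner route is to resolve the total space. Blow up $\mathcal{X}$ along $Y_1$ (or an appropriate center supported on the special fiber) to separate the components, then resolve each resulting component and the total space, arranging that the composite $\wt{\mathcal{X}} \to \mathcal{X}$ is universally $\CH_0$-trivial; this uses the hypothesis that $Y_1 \cap Y_2$ is universally $\CH_0$-trivial (to control the exceptional loci along the intersection) together with the universal $\CH_0$-triviality of $f : Y_1' \to Y_1$. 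The special fiber of $\wt{\mathcal{X}}$ is then, up to universally $\CH_0$-trivial morphisms, built from $Y_1'$ and (a resolution of) $Y_2$ glued along a universally $\CH_0$-trivial scheme. Now apply the specialization theorem of \cite{CTP} to $\wt{\mathcal{X}} \to A$: if $X_{K\sep}$ were universally $\CH_0$-trivial, then so would be the special fiber of $\wt{\mathcal{X}}$.

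Finally, I derive the contradiction. Using the Mayer–Vietoris-type surjection $\CH_0(A_1) \oplus \CH_0(A_2) \twoheadrightarrow \CH_0(A_1 \cup_{A_0} A_2)$ for a scheme that is a union of two closed subschemes $A_1, A_2$ meeting along $A_0$ (this follows from the localization sequence and the fact that $0$-cycles supported on $A_1 \cup A_2$ are sums of those on $A_1$ and on $A_2$), together with the hypothesis that $A_0 := Y_1 \cap Y_2$ (hence its relevant modification) is universally $\CH_0$-trivial, I reduce the universal $\CH_0$-triviality of the special fiber to that of its components. In particular it would force $Y_1'$ to be universally $\CH_0$-trivial over every $k'/k$ — precisely what is excluded by hypothesis. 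This contradiction shows $X_{K\sep}$ is not universally $\CH_0$-trivial, so $Y$ is a reference variety.

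The main obstacle is the middle step: constructing the modification of the total space $\mathcal{X}$ so that it is genuinely universally $\CH_0$-trivial over $\mathcal{X}$ while having a special fiber one can analyze component-by-component. The given resolution $f$ is of $Y_1$ as a variety, not of a neighborhood of $Y_1$ in $\mathcal{X}$, and the two components $Y_1, Y_2$ may meet $\mathcal{X}$'s singular locus in subtle ways; one has to choose the blow-up centers (supported over the closed point of $A$, and dominating $Y_1 \cap Y_2$ appropriately) so that all exceptional fibers are universally $\CH_0$-trivial — projective spaces or towers of such — which is exactly where the irreducibility and universal $\CH_0$-triviality of $Y_1 \cap Y_2$ are essential. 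Everything else — the reduction on $A$, the Mayer–Vietoris surjectivity, and the invocation of \cite{CTP} — is by now standard.
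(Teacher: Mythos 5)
Your argument has two genuine gaps, and the second is the one you yourself flag as the ``main obstacle'': the construction of a universally $\CH_0$-trivial modification $\wt{\mathcal X}\to\mathcal X$ whose special fiber is built from $Y_1'$ and a resolution of $Y_2$. Nothing in the hypotheses makes this possible: no assumption whatsoever is made on $Y_2$ (it may be arbitrarily singular, with uncontrolled $\CH_0$), the resolution $f\colon Y_1'\to Y_1$ is a resolution of the abstract variety $Y_1$ and need not extend to, or be induced by, any modification of $\mathcal X$ near $Y_1$, and the singularities of $\mathcal X$ itself are not constrained. So the step that would let you invoke the specialization theorem of \cite{CTP} for $\wt{\mathcal X}$ is not available. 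There is also a logical direction error at the end: the Mayer--Vietoris-type surjection $\CH_0(A_1)\oplus\CH_0(A_2)\twoheadrightarrow\CH_0(A_1\cup_{A_0}A_2)$ lets you pass triviality from the components to the union, not the other way around; universal $\CH_0$-triviality of the special fiber does not, via that surjection, force universal $\CH_0$-triviality of $Y_1'$. (Even your opening reduction --- that triviality of $X_{K\sep}$ forces the singular reducible special fiber to be universally $\CH_0$-trivial ``as a scheme'' --- is exactly the kind of statement that fails without a universally $\CH_0$-trivial resolution of that special fiber, which is what your missing middle step was supposed to supply.)

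The paper's proof avoids the total space entirely and needs nothing about $Y_2$. It produces, for a suitable extension $l/k$, a nonzero degree-$0$ class $\xi\in\CH_0(Y_l)$ supported on the \emph{smooth locus} of $Y$: first, from the localization sequence for $Y_1\cap Y_2\subset Y_1$ and the hypotheses, a nonzero class in $\CH_0(U_{1,l})$ with $U_1=Y_1\smallsetminus(Y_1\cap Y_2)$; then, lifting to $f^{-1}(U_{1,l})$ and applying the moving lemma for $0$-cycles on the smooth variety $f^{-1}(U_{1,l})$ --- this is precisely where the universally $\CH_0$-trivial resolution $f$ is used --- to move $\xi$ into the smooth locus; then a comparison of the localization sequences for $Y_1$ and for $Y$ relative to the closed subset $Y_1\cap Y_2$ (the restriction $\CH_0(Y_l)\to\CH_0(U_{1,l})\oplus\CH_0(U_{2,l})$ detects $\xi$) shows $\xi\neq 0$ in $\CH_0(Y_l)$. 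The degeneration input is then Totaro's Lemma~2.4 of \cite{totaro-JAMS}, which says that such a class supported on the smooth locus of the special fiber already obstructs universal $\CH_0$-triviality of $X_{K\sep}$ --- no resolution of $\mathcal X$, of $Y$, or of $Y_2$ is required. If you want to salvage your outline, you should replace the total-space modification and the Mayer--Vietoris step with this ``nonzero cycle on the smooth locus plus Totaro's lemma'' mechanism.
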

\begin{proof}
Let $U_i=Y_i\smallsetminus (Y_1\cap Y_2)$.  Since $Y_1 \cap Y_2$ is
universally $\CH_0$-trivial and $Y_1$ is not universally
$\CH_0$-trivial, there exists a field extension $l/k$ such that in the
localization exact sequence over~$l$
$$
\CH_0((Y_1\cap Y_2)_l)\to \CH_0(Y_{1,l})\stackrel{\iota}{\to}
\CH_0(U_{1,l})\to 0$$
the first map is not surjective. Hence, there is a non-trivial 0-cycle
$\xi\in \CH_0(U_{1,l})$.  Since $k$ is algebraically closed, $U_1(k)
\neq \varnothing$, thus we can also assume $\xi$ has degree $0$.

We can furthermore assume that $\xi$ is supported on the smooth locus of
$U_1$. Indeed, let $V\subset U_1$ be a smooth open subscheme such
that $f$ induces an isomorphism $f^{-1}(V)\isomto V$, and
consider the following commutative diagram
$$
\xymatrix{
\CH_0(Y'_{1,l})\ar[r]\ar[d]^{f_*}& \CH_0(f^{-1}(U_{1,l}))\ar[d]^{f_*}&\\
\CH_0(Y_{1,l})\ar[r]& \CH_0(U_{1,l}).&\\
}
$$
The left vertical map is an isomorphism and the horizontal maps are
surjective. We deduce that $\xi$ comes from a non-trivial element
$\xi'\in \CH_0(f^{-1}(U_{1,l}))$. Since $f^{-1}(U_{1,l})$ is smooth,
by a moving lemma for 0-cycles \cite[p.~599]{colliot-thelene:finitude},
we can assume that $\xi'$ is supported on the open subscheme
$f^{-1}(V_l)$ and, hence, that $\xi$ is supported on~$V_l$.

By construction, the image of $\xi$ in $\CH_0(Y_l)$ is nonzero.
In fact, this follows from the commutative diagram
$$
\xymatrix{
\CH_0((Y_1\cap Y_2)_l)\ar[r]\ar[d]& \CH_0(Y_{1,l})\ar[r]\ar[d]& \CH_0(U_{1,l})\ar[d]\ar[r]& 0\\
\CH_0((Y_1\cap Y_2)_l)\ar[r]& \CH_0(Y_{l})\ar[r]& \CH_0(U_{1,l})\oplus \CH_0(U_{2,l})\ar[r]& 0
}
$$
where the horizontal sequences are the localization exact sequences,
the leftmost vertical map is an isomorphism, the middle vertical map
is induced by the inclusion $Y_1\to Y$, and the rightmost vertical map
is injective.

By the argument above, we can also assume that $\xi$   is supported on the smooth locus of $Y_l$. Then \cite[Lemma~2.4]{totaro-JAMS} shows that $Y$ is a reference variety.
\end{proof}

We remark that there is a version of this Proposition with $Y$ having
multiple irreducible components whose intersections have multiple
components. In what follows, though, we only use the case $Y_1\cap
Y_2$ irreducible.

We briefly mention two conditions ensuring that a proper variety $Y$
is universally $\CH_0$-trivial.  First, $Y$ is universally
$\CH_0$-trivial if there exists a proper surjective morphism $Y' \to
Y$ with $Y'$ universally $\CH_0$-trivial and such that for any field
extension $k'/k$ and any scheme theoretic point $y \in Y_{k'}$, the
fiber $Y'_{y}$ has a 0-cycle of degree 1.  Second, $Y$ is universally
$\CH_0$-trivial if it admits a universally $\CH_0$-trivial resolution
by a universally $\CH_0$-trivial variety.

For hypersurfaces, a lemma implied by Proposition~\ref{prop:reducible}
was utilized by Totaro to arrive at an inductive procedure for
investigating the stable rationality of a smooth hypersurface $W$ of
degree $2n+1$ in projective space by degenerating $W$ to the union of
a smooth hypersurface of degree $2n$ and a hyperplane.  In
\cite[\S4]{bohboth}, a similar inductive procedure is used,
degenerating a hypersurface of bidegree $(2,n+1)$ in $\P^2 \times
\P^2$ to the union of hypersurfaces of bidegree $(2,n)$ and $(0,1)$.
In this later case, the intersection of hypersurfaces of bidegree
$(2,n)$ and $(0,1)$ in $\P^2 \times \P^2$ can be chosen to be smooth
and has the structure of a conic bundle over $\P^1$, hence is rational
and thus universally $\CH_0$-trivial.  However, attempting this for a
hypersurface of bidegree $(2,n+1)$ in $\P^2 \times \P^3$ is more
subtle.  One can still degenerate to the union of hypersurfaces of
bidegree $(2,n)$ and $(0,1)$, but now the intersection of these
components has the structure of a conic bundle over $\P^2$, which can
certainly fail to be universally $\CH_0$-trivial, cf.\
\cite{HKT-conic}, \cite{bohboth}.  We shall overcome this problem for
hypersurfaces of bidegree $(2,3)$ in $\P^2 \times \P^3$ by using the
explicit geometry of the conic bundle structure on the reference
variety \eqref{refHPT} studied in \cite{auel-co}.

In our construction, we start with the reference variety
\eqref{refHPT}, a singular hypersurface $Y_1$ of bidegree $(2,2)$ in
$\P^2 \times \P^3$.  We recall from \cite[\S3]{auel-co} that
projection to $\P^3$ gives the structure of a conic bundle $Y_1 \to
\P^3$ defined by the $3\times 3$ matrix 
\begin{gather}\label{eq:HPT_conic}
\begin{pmatrix}
v^2 & u^2-v^2 & t^2-v^2 \\
u^2-v^2 & v^2 & s^2-v^2 \\
t^2-v^2 & s^2-v^2 & v^2 
\end{pmatrix}
\end{gather}
of homogeneous quadratic forms on $\P^3$.  The discriminant of this
conic bundle is the sextic surface $D \subset \P^3$ defined by
$$
4v^6 - 4(s^2+t^2+u^2) v^4 + (s^2+t^2+u^2)^2 v^2 - 2 s^2 t^2 u^2 = 0
$$
which has two irreducible cubic surface components $D_\pm$,
defined by
$$
2 v^3 - v (s^2+t^2+u^2) \pm \sqrt{2} stu = 0.
$$
Each component $D_\pm$ is a tetrahedral Goursat surface
\cite{goursat}, hence up to projective equivalence, is isomorphic to
the Cayley nodal cubic surface.  The intersection ${D_+ \cap D_-}$ is an
arrangement of a triangle of lines and three conics, see
\cite[Fig.~1]{auel-co}.  A Cayley cubic surface contains 4 ordinary
double points and 9 lines: 6 of the lines form the edges of a
tetrahedron whose vertices are the 4 singular points, while the
remaining 3 lines form a triangle not meeting the singular points.  In
our case, $D_+ \cap D_-$ does not contain the ordinary double points
of either component, hence contains this later triangle of lines, which is
thus common to both Cayley cubic surface $D_+$ and~$D_-$.

Our aim is then to choose an appropriate hyperplane $H \subset \P^3$
such that the configuration $Y_1 \cup (\P^2 \times H)$, thought of as a
union of hypersurfaces of bidegree $(2,2)$ and $(0,1)$, is a reference
variety for hypersurfaces of bidegree $(2,3)$ in $\P^2 \times \P^3$.
Since $Y_1$ admits a universally $\CH_0$-trivial resolution that is not
universally $\CH_0$-trivial by \cite{HPT} or \cite{auel-co}, then by
Proposition~\ref{prop:reducible} we only need to verify, for our
choice of $H \subset \P^3$, that $Y_1 \cap (\P^2 \times H)$ admits a
universally $\CH_0$-trivial resolution by a smooth variety that is
universally $\CH_0$-trivial.

We remark that $Y_1 \cap (\P^2 \times H) = Y_1|_H \to H$ is simply the
restriction of the conic bundle $Y_1 \to \P^3$ to $\P^2 = H \subset
\P^3$, whose discriminant divisor is now $D \cap H \subset H$.  In
particular, $Y_1|_H \subset \P^2 \times H = \P^2 \times \P^2$ is a
hypersurface of bidegree $(2,2)$.  By \cite{bohboth}, the very general
such hypersurface is not universally $\CH_0$-trivial, so we would
expect that for a very general choice of $H \subset \P^3$, the variety
$Y_1|_H$ would \emph{not} be universally $\CH_0$-trivial.  Indeed, in
our case, we can verify this explicitly.  Let $\alpha\in
H^2(\C(\P^3),\Z/2)$ be the Brauer class corresponding to the generic
fiber of the conic bundle $Y_1 \to \P^3$ and let $\gamma_\pm \in
H^1(\C(D_\pm),\Z/2)$ be its residue class on the component $D_\pm$ of
the discriminant. By the analysis in \cite[\S3]{auel-co}, we know that
$\gamma_\pm$ is \'etale away from the 4 singular points of $D_\pm$.
The residues of the conic bundle $Y_1|_H \to H$ on the components of
its discriminant $D \cap H = (D_+ \cap H) \cup (D_- \cap H)$ are simply
the restrictions of $\gamma_\pm$.  A general hypersurface $H \subset
\P^3$ will cut $D$ in the union of two smooth elliptic curves $E_+
\cup E_-$ meeting transversally and the restriction of $\gamma_\pm$ to
$E_\pm$ are \'etale and would be nontrivial by a suitable version of
the Lefschetz hyperplane theorem.  Hence, by a formula due to
Colliot-Th\'el\`ene (cf.~\cite{pirutka-survol}), the unramified Brauer
group of $Y_1|_H$ would have nontrivial 2-torsion, and since it only
has isolated nodes, it admits a universally $\CH_0$-trivial
resolution by a smooth projective variety that would \emph{not} be universally $\CH_0$-trivial.  In
conclusion, we need to choose the hyperplane $H \subset \P^3$ in a
special way.

We choose the plane $H \subset \P^3$ to be the unique plane spanned by
the triangle of lines contained in $D_+ \cap D_-$.  Then from the
explicit representation \eqref{eq:HPT_conic} of $Y_1$ as a conic
bundle, $H = \{v=0\}$ and we have that $Y_0 = Y_1|_H \to H$ is the
hypersurface of bidegree $(2,2)$ in $\P^2 \times \P^2$ defined by
\begin{equation}\label{eq:HPT_section}
xy u^2 + xz t^2 + yz s^2 = 0
\end{equation}
where $(x:y:z)$ and $(u:t:s)$ are sets of homogeneous coordinates on
$\P^2$.  Given the above discussion, Theorem~\ref{thm:main_intro} will
follow from the following.

\begin{prop}
\label{prop:resolution_HPT_section}
The hypersurface $Y_0$ of bidegree $(2,2)$ in $\P^2 \times \P^2$
defined by \eqref{eq:HPT_section} is a singular projective rational
variety admitting a universally $\CH_0$-trivial resolution.
\end{prop}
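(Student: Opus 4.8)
\emph{Proof idea.} The plan is to determine $\mathrm{Sing}(Y_0)$ explicitly, to read off rationality from a direct birational parametrization, and then to build the required resolution as a short tower of blowups along smooth rational curves, verifying at each stage that every exceptional fibre is a point, a $\P^1$, or a chain of $\P^1$'s, so that the universal $\CH_0$-triviality criterion of \cite{CTP} applies. Writing $F = xyu^2 + xzt^2 + yzs^2$, one has $F_u = 2xyu$, $F_t = 2xzt$, $F_s = 2yzs$, so at a singular point a coordinate from $\{x,y,z\}$ or its partner from $\{u,t,s\}$ must vanish, and a short case check gives $\mathrm{Sing}(Y_0) = C_x \cup C_y \cup C_z$, where $C_x = \{(0:-b^2:a^2;\,a:b:0):(a:b)\in\P^1\}$ and $C_y, C_z$ are the images of $C_x$ under the evident $S_3$-action permuting $(x,y,z)$ together with the matching permutation of $(u,t,s)$. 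Each $C_\ast$ is a smooth rational curve (mapping isomorphically onto a side of the coordinate triangle in the second $\P^2$), and the three curves meet pairwise and transversally in the three points $P_{xy} = (0:0:1;\,1:0:0)$, $P_{xz} = (0:1:0;\,0:1:0)$, $P_{yz} = (1:0:0;\,0:0:1)$, with no common point. On the affine chart $z = s = 1$ the equation reads $xyu^2 + xt^2 + y = 0$, which solves rationally for $y$; hence $Y_0$ is birational to $\A^3$. Since $F$ is irreducible (being a nondegenerate diagonal quadratic form in $u,t,s$ over $\C(x,y,z)$), $Y_0$ is an irreducible projective variety, singular along the $C_\ast$, and rational, giving the first part of the statement. (Equivalently, projection to the second factor presents $Y_0$ as a conic bundle with the section $(u:t:s)\mapsto((1:0:0);(u:t:s))$.)

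Next I analyze the singularities locally. Away from the $P_{ij}$, an affine coordinate change absorbing the nonvanishing terms shows that $Y_0$ has a transverse $A_1$ singularity along the $C_\ast$, i.e.\ is locally of the form $\{\xi\eta = \sigma^2\}\times\A^1$. At $P_{xy}$, the automorphism $(x,y,t,s)\mapsto(x+s^2,\,y+t^2,\,t,\,s)$ of the affine chart $\{z=u=1\}$ takes $F$ to $XY - t^2 s^2$, so $Y_0$ is Zariski-locally isomorphic near $P_{ij}$ to $\{XY = t^2 s^2\}\subset\A^4$, in which the two singular lines $\{X=Y=s=0\}$ and $\{X=Y=t=0\}$ are the local branches of the two curves $C_\ast$ through $P_{ij}$. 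The resolution will be $\tilde Y_0 = \mathrm{Bl}_{C_z''}\mathrm{Bl}_{C_y'}\mathrm{Bl}_{C_x}Y_0$, with $C_y'$, $C_z''$ the successive strict transforms. The three local computations to carry out are: (i) blowing up a transverse $A_1$ locus is a single blowup, with exceptional divisor a $\P^1$-bundle over the locus; (ii) blowing up $\{XY = t^2 s^2\}$ along one of its singular lines is smooth in two of the three charts and equal to $\{\mu\nu = t^2\}$—again transverse $A_1$, now along the strict transform of the other line—in the third; (iii) blowing up $\{\mu\nu = t^2\}$ along its singular line is smooth in every chart.

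Because $C_x$ passes through $P_{xy}$ and $P_{xz}$ but not $P_{yz}$, and symmetrically for $C_y,C_z$, the three blowups do not interfere: away from the $C_\ast$ nothing changes; over a general point of any $C_\ast$ the single blowup of that curve resolves a transverse $A_1$ by (i); and near each $P_{ij}$ precisely the blowups of the two curves through it are active, the first bringing $\{XY = t^2 s^2\}$ into the form $\{\mu\nu = t^2\}$ by (ii) and the second resolving it by (iii). One also notes that $C_x$, $C_y'$, $C_z''$ are smooth rational curves (each $C_\ast$ is smooth and the three meet transversally), so $\tilde Y_0$ is smooth and $f\colon\tilde Y_0\to Y_0$ is proper birational. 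For each scheme-theoretic point $y\in Y_0$ the fibre $f^{-1}(y)$ is then a point if $y$ is a smooth point of $Y_0$; a $\P^1$ if $y$ lies on some $C_\ast$ but is not a $P_{ij}$; and, over each $P_{ij}$, a chain of three $\P^1$'s (the two exceptional lines from the first active blowup, joined at their common point by the smooth conic that is the exceptional fibre of the second). Each of these is a universally $\CH_0$-trivial $\kappa(y)$-variety, so $f$ is universally $\CH_0$-trivial by \cite{CTP}, which completes the proof.

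I expect the main obstacle to be the bookkeeping around the three points $P_{ij}$: confirming the local normal form $\{XY = t^2 s^2\}$, checking that the two-step resolution there is simultaneously compatible with the single blowups of the three curves and produces no further singularity, and verifying that the exceptional fibre over each $P_{ij}$ is exactly a chain of rational curves rather than something carrying nontrivial $\CH_0$.
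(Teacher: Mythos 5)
Your proposal is correct and follows essentially the same route as the paper: the same three singular curves, the same completion-of-squares change of variables giving the local model $x_1y_1=s^2t^2$, the same resolution by blowing up one singular curve and then the strict transform of the other, the same fiber analysis (points, smooth conics, chains of three rational curves) feeding into \cite[Prop.~1.8]{CTP}, and rationality via the constant section $(1:0:0)$ of the conic bundle, which is exactly the paper's isotropic-vector argument. The only difference is presentational: you organize the resolution globally as three successive curve blowups, whereas the paper computes in one affine chart and invokes the $S_3$-symmetry.
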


We remark that since $Y_0$ is rational, any smooth proper variety
birational to $Y_0$ will be $\CH_0$-universally trivial,
cf.~\cite[\S1.2]{ACTP:unramified_cohomology_cubic_fourfold}.  Hence to
apply Proposition~\ref{prop:reducible}, we only need to verify that
$Y_0$ admits a universally $\CH_0$-trivial resolution.  

\begin{proof}[Proof of Proposition~\ref{prop:resolution_HPT_section}]
The singular locus $Y_0\sing$ of $Y_0$ is the union of three curves:
\begin{align}
\begin{split}
C_x & {}\;\; : \;\; x=s=0,\; u^2y + t^2z=0\\
C_y & {}\;\; : \;\; y=t=0,\; u^2y + s^2x=0\\
C_z & {}\;\; : \;\; z=u=0,\; t^2x+s^2y=0.
\end{split}
\end{align}
The intersection of $Y_0\sing$ with the chart $z\neq 0$ in $\P^2
\times \P^2$ is contained in the affine chart $\A^4 \subset \P^2
\times \P^2$ with coordinates $(x,y,s,t)$, defined by $z=1,
u=1$. Hence it is enough to construct a resolution for the restriction
$U$ of $Y_0$ to this affine chart:
\begin{equation*}
U \;\; : \;\; xy + xt^2 + ys^2 = 0
\end{equation*} 
By making the change of variables $y_1=y+t^2$ and $x_1=x+s^2$ we
obtain the equation:
\begin{equation*}
U \;\; : \;\; x_1y_1-s^2t^2=0
\end{equation*} 
The singular locus of $U$ is thus the union of two curves:
\begin{align*}
\begin{split}
B_s & {}\;\; : \;\; x_1=y_1=s=0 \\ 
B_t & {}\;\; : \;\; x_1=y_1=t=0
\end{split}
\end{align*}

Let $\tilde U\to U$ be the composition of the blow up along $B_s$ and
then the blow up along the strict transform of $B_t$. We claim that
$\tilde U$ is smooth and that the map $\tilde U\to U$ is universally
$\CH_0$-trivial. To check this, we use the local blow up calculations
below. Note that we need to discuss only two charts in each case,
using symmetry between $x_1$ and $y_1$, and $x_2$ and $y_2$, with the
notations below.

\begin{enumerate}
\item[\textit{1)}] First blow up $U$ along $B_s$:
\begin{itemize}
\item In the chart defined by $y_1=x_1y_2, s=x_1s_2$, the equation of the blow up is $y_2-t^2s_2^2=0$, and the exceptional divisor is $x_1=0, y_2-t^2s_2^2=0$.
\item In the chart defined by $x_1=sx_2, y_1=sy_2$, the equation of the blow up is $x_2y_2-t^2=0$ and the  the exceptional divisor is $s=0, x_2y_2-t^2=0$. 
\end{itemize}

\item[\textit{2)}] Second blow up the proper transform $B_t'$ of $B_t$: 
$$
B_t' \;\; : \;\; x_2=y_2=t=0.
$$
\begin{itemize}
\item In the chart defined by $y_2=x_2y_3, t=x_2t_3$, the equation of the blow up is $y_3-t_3^2=0$, the exceptional divisor is $x_2=0, y_3-t_3^2=0$.
\item In the chart defined by $x_2=tx_3, y_2=ty_3$, the equation of the blow up is $x_3y_3-1=0$, the exceptional divisor is $t=0, x_3y_3-1=0$.
\end{itemize}
\end{enumerate}
We see immediately that $\tilde U$ is smooth, and that the resolution
$\tilde U \to U$ has scheme-theoretic fibers that are either smooth
rational conics or chains of lines, hence these fibers are universally
$\CH_0$-trivial.  We conclude, using \cite[Prop.~1.8]{CTP}, that
$\tilde U \to U$ is a universally $\CH_0$-trivial resolution. 

Now we verify the rationality of $Y_0$.  As a divisor of bidegree
$(2,2)$ in $\P^2 \times \P^2$, the variety $Y_0$ admits a conic bundle
structure for each of the projections to $\P^2$.  From the explicit
representation \eqref{eq:HPT_conic}, we see that under the projection
to the $\P^2$ with homogeneous coordinates $(u:t:s)$, the conic bundle
$Y_0 \to \P^2$ is defined by the matrix of homogeneous forms:
\begin{equation}\label{eq:conicY_0}
\begin{pmatrix}
0 & u^2 & t^2 \\
u^2 & 0 & s^2 \\
t^2 & s^2 & 0
\end{pmatrix}
\end{equation}
Since the quadratic form is clearly isotropic over the generic point
of the base $\P^2$ (for example, $(x:y:z)=(1:0:0)$ is an isotropic
vector), the generic fiber of $Y_0$ is rational over a rational base,
hence $Y_0$ is rational.
\end{proof}

We can also analyze the singularities of $Y_0$ from the point of view
of the conic bundle structure $Y_0 \to \P^2$ considered in
\eqref{eq:conicY_0}.  Note that the discriminant is the double
triangle $(uts)^2$ in $\P^2$ and the conic fibers have constant rank 2
along the discriminant.  The singular locus of $Y_0$ consists of a
rational curve above each line of the double triangle forming the
discriminant.  We now describe the analytic local normal forms for
such a conic bundle.

Let $k$ be an algebraically closed field of characteristic $\neq
2$. Considering a point on only one of the double lines, we have the
following.  Any conic over a complete 2-dimensional regular local ring
$k[[u,t]]$ degenerating to conics of rank 2 over $u^2$ can be brought
to the normal form
$$
x^2 + y^2 + u^2 z^2 = 0.
$$
In this case, the singular locus is the rational curve $x=y=u=0$ lying
over $u=0$ in the base. Simply blowing up this curve is a universally
$\CH_0$-trivial resolution.

Now considering a point on the intersection of two of the double
lines, we have the following.  Any conic over a complete 2-dimensional
regular local ring $k[[u,t]]$ degenerating to conics of rank 2 over
$(uv)^2=0$ can be brought to the normal form
$$
x^2 + y^2 + u^2v^2 z^2 = 0.
$$
In this case, the singular locus is the union of rational curves
$x=y=u=0$ and $x=y=v=0$ lying over $uv=0$ in the base. Blowing up one
of these rational curves, and then the strict transform of the other
yields a universally $\CH_0$-trivial resolution whose fiber above the
generic point of either $u=0$ or $v=0$ is a $\P^1$ (over the function
field of a curve over $k$) and above the intersection point
is a chain of three smooth rational curves.

\smallskip

Finally, we combine all this together to give our first proof of
Theorem~\ref{thm:main_intro}.

\begin{proof}[Proof of Theorem~\ref{thm:main_intro}]
Let $Y_1$ be the reference variety \eqref{refHPT} for hypersurfaces of
bidegree $(2,2)$ in $\P^2 \times \P^3$.  The discriminant of the conic
bundle $Y_1 \to \P^3$ defined by projection to the second factor is
the union of two Cayley cubic surfaces meeting along a triangle of
lines and a configuration of three smooth conics.  Let $H \subset
\P^3$ be the unique hyperplane through this triangle of lines.  Then
$Y_2 = \P^2 \times H$ is a hypersurface of bidegree $(0,1)$ in $\P^2
\times \P^3$.  Consider the reducible projective variety $Y = Y_1 \cup
Y_2$, which is a hypersurface of bidegree $(2,3)$ in $\P^2 \times
\P^3$.  Then $Y_0 = Y_1 \cap Y_2$ is the irreducible projective
variety with equation \eqref{eq:HPT_section}, which by
Proposition~\ref{prop:resolution_HPT_section}, admits a universally
$\CH_0$-trivial resolution $\tilde{Y}_0 \to Y_0$ with $\tilde{Y}_0$
smooth and rational, hence in particular, $Y_0$ is universally
$\CH_0$-trivial.  Thus by an application of
Proposition~\ref{prop:resolution_HPT_section}, we have that $Y$ is a
reference variety for hypersurfaces of bidegree $(2,3)$ in $\P^2
\times \P^3$.  By the specialization method \cite{voisin}, \cite{CTP},
then the very general hypersurface of bidegree $(2,3)$ in $\P^2 \times
\P^3$ is not universally $\CH_0$-trivial, and in particular, is not
stably rational.
\end{proof}



\begin{rema}\label{rem:somerational}
We remark that in the Hilbert scheme of all hypersurfaces of bidegree
$(2,3)$ in $\P^2 \times \P^3$, there is a nonempty closed subvariety
whose general element $Y$ is rational.  Indeed, any bidegree $(2,3)$
hypersurface $Y$ in $\P^2 \times \P^3$ of the form
$$
A sv + B su + C tv + D tu = 0,
$$
where $A,B,C,D$ are general homogeneous forms of bidegree $(2,1)$ on
$\P^2 \times \P^3$, is smooth and the generic fiber of the associated
cubic surface bundle $Y \to \P^2$ contains the disjoint lines
$\{s=t=0\}$ and $\{u=v=0\}$, hence is a rational cubic surface over
$k(\P^2)$, hence $Y$ is a rational variety.  This provides a simple
and geometrically appealing of a family of smooth projective
rationally connected fourfolds whose very general fiber is not stably
rational but where some fibers are rational, cf.\ \cite{HPT}.
\end{rema}

\section{Remarks on the Brauer group of cubic surface bundles}
\label{sec:cubic}

In this section, we study the Brauer group of a cubic surface bundle
over a rational surface.  Cubic surface bundles naturally arise in the
study of hypersurfaces of bidegree $(2,3)$ in $\P^2 \times \P^3$, via
projection onto the first factor.

Let $S$ be a smooth projective rational surface over a field $k$ of
characteristic not dividing 6 and let $K = k(S)$.  Let $\pi : X \to S$
be a flat projective morphism whose generic fiber $X_K$ is a smooth
cubic surface over $K$.  (We will also temporarily consider cases when
the generic fiber is not smooth.)  Moreover, we will assume that
$\pi_* \omega_{X/S}\dual = E$ is a rank 4 locally free $\OO_S$-module
and that $X \subset \P(E)$ is a relative cubic hypersurface over $S$
defined by the vanishing of a global section of $S^3(E\dual)\tensor L$
for some line bundle $L$ on $S$.

The locus in $S$ over which the fibers are singular is a divisor that
carries a canonical scheme structure called the \linedef{discriminant}
divisor $\Delta \subset S$ of the cubic surface bundle.  This divisor
can be constructed using invariant theory as follows.

Consider the Hilbert scheme 
$$
\HH = \P(H^0(\P^3,\OO(3))) = \P^{19}
$$
of cubic surfaces in $\P^3$ over $k$ and the action of $\SL_4$ on
$\HH$ by change of variables.  Salmon~\cite{salmon:discriminant}, and
independently Clebsch~\cite{clebsch:quaternary_cubic},
\cite{clebsch:quaternary_cubic_II}, found that the ring of invariants
of cubic forms in four variables is generated by \linedef{fundamental
invariants} $A,B,C,D,E$ in degrees 8, 16, 24, 32, 40, and 100 where
the square of the invariant of degree 100 is a polynomial in the
remaining invariants.  This implies that the associated GIT quotient
is isomorphic to a weighted projective space
$$
\P(H^0(\P^3,\OO(3)))/\!\!/\mathrm{SL}_4 \isom \P(8,16,24,32,40)\isom
\P(1,2,3,4,5).
$$
Salmon found a formula for the discriminant in terms of the
fundamental invariants, though his formula contained an error.  This
error had been repeated throughout the 19th and 20th century until it
was corrected by Edge~\cite{edge:discriminant_cubic_surface}
$$
\Delta = (A^2 - 64 B)^2 - 2^{11}(8 D + AC) 
$$
For further modern study of the fundamental invariants and the
discriminant of a cubic surface, see
\cite{beklemishev:cubic_four_variables},
\cite[\S10.4]{dolgachev:lectures_invariant_theory},
\cite[\S2]{elsenhans_jahnel:discriminant_cubic_surface}.

The formula for the discriminant $\Delta$ in terms of the monomials of
a generic cubic surface defines an integral hypersurface of degree 32
in $\HH$.  Given a quadric surface bundle $\pi : X \to S$, with $X
\subset \P(E)$ for a rank 4 vector bundle $E$ on $S$, and a Zariski
open $U \subset S$ over which $E$ is trivial, the restricted cubic
surface bundle $\pi|_{U} : X|_U \to U$ is defined by the restriction
of the universal cubic hypersurface on $\HH$ via a classifying map $U
\to \HH$.  Then the locus of points in $U$ over which the fibers of
$\pi|_U$ are singular is contained in the divisor $\Delta|_U \subset
U$.  By choosing a Zariski open cover of $S$ trivializing $E$, we can
glue these divisors to yield the discriminant divisor $\Delta \subset
S$ of the cubic surface bundle $\pi : X \to S$.

\medskip

In the rest of the section, we make a series of remarks about the
second unramified cohomology group $\Hur^2(k(X)/k,\Q/\Z(1))$ of the
total space of a cubic surface bundle $\pi : X \to S$, or what is the
same, the Brauer group of a smooth proper model of $X$.  We will often
call this group the \linedef{unramified Brauer group} $\Brur(k(X)/k)$.

We have $\Brur(k(X)/k) \subset \Brur(k(X)/K) = \Br(X_K)$, where the
second equality holds by purity (cf.\
\cite[\S2.2.2]{colliot:santa_barbara}) because the generic fiber
$X_K$ is a smooth projective cubic surface over $K$. Part of the
sequence of low degree terms of the Leray spectral sequence for the
\'etale sheaf $\Gm$ associated to the structural morphism $X_K \to
\Spec(K)$ is
\begin{equation}\label{eq:leray}
\small
\Pic(X) \to H^0(K,\Pic(X_{K\sep})) \to \Br(K) \to \Br(X_K) \to H^1(K,\Pic(X_{K\sep})) \to H^3(K,\Gm)
\end{equation}
where here $\Br(X_K) = \ker(\Br(X_K) \to \Br(X_{K\sep}))$ because
$X_K$ is geometrically rational.  Hence the cokernel of the map
$\Br(K) \to \Br(X_K)$ is isomorphic to a subgroup of
$H^1(K,\Pic(X_{K\sep}))$.  In fact,
Swinnerton-Dyer~\cite{swinnerton-dyer:Brauer_group_cubic_surface} has
computed the possible nontrivial values for $H^1(K,\Pic(X_{K\sep}))$
when $X_K$ is a smooth cubic surface: they are $\Z/2$, $\Z/2 \times
\Z/2$, $\Z/3$, and $\Z/3 \times \Z/3$.  As a corollary, we arrive at
the following.

\begin{prop}
\label{prop:Br6}
Let $X$ be a smooth cubic surface over a field $K$.  Then the cokernel
of the map $\Br(K) \to \Br(X)$ is killed by 6.
\end{prop}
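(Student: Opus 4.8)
The plan is to exploit the Leray sequence \eqref{eq:leray} exactly as set up in the preceding discussion. First I would note that since $X$ is a smooth cubic surface over $K$, it is geometrically rational, so $\Br(X_{K\sep}) = 0$ and hence $\Br(X) = \ker(\Br(X) \to \Br(X_{K\sep}))$; moreover by purity $\Br(X) = \Brur(K(X)/K)$, though for this statement only the Leray sequence is needed. From \eqref{eq:leray} the cokernel of $\Br(K) \to \Br(X)$ injects into $H^1(K, \Pic(X_{K\sep}))$. The key input is then the theorem of Swinnerton-Dyer quoted just above: for $X$ a smooth cubic surface, $H^1(K,\Pic(X_{K\sep}))$ is one of $\Z/2$, $\Z/2 \times \Z/2$, $\Z/3$, or $\Z/3 \times \Z/3$ (or trivial). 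Each of these groups is killed by $\mathrm{lcm}(2,3) = 6$, so the subgroup $\mathrm{coker}(\Br(K) \to \Br(X))$ is killed by $6$ as well. That is the whole argument.

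The reason Swinnerton-Dyer's list has this shape, which I would recall for context rather than reprove, is that $\Pic(X_{K\sep}) \isom \Z^7$ with the Galois action factoring through a subgroup $G$ of the Weyl group $W(E_6)$ of the $27$ lines, and $H^1(G, \Pic(X_{K\sep}))$ depends only on the conjugacy class of $G$ in $W(E_6)$. A finite computation over all such subgroups yields exactly the four nontrivial values above. Since any finite abelian group appearing here is a product of cyclic groups of order $2$ or $3$, multiplication by $6$ annihilates it, which gives the claim. One could alternatively invoke the general bound that $H^1(G,\Pic)$ for a del Pezzo surface of degree $d$ is killed by an explicit integer depending on $d$; for $d = 3$ this integer is $6$.

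The only subtlety — and it is mild — is making sure the map into $H^1(K,\Pic(X_{K\sep}))$ is the right one: the sequence \eqref{eq:leray} identifies $\mathrm{coker}(\Br(K)\to\Br(X_K))$ with a \emph{sub}group of $H^1(K,\Pic(X_{K\sep}))$, namely the kernel of the next arrow into $H^3(K,\Gm)$, so a priori the cokernel could be strictly smaller; that only helps, since a subgroup of a group killed by $6$ is killed by $6$. No flatness or bundle hypotheses on a model are needed: the statement is purely about the smooth cubic surface $X/K$ itself. So there is no real obstacle here — the proposition is an immediate corollary of the Leray sequence together with Swinnerton-Dyer's classification, and I would expect the author's proof to consist of exactly these two sentences.
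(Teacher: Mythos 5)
Your argument is exactly the paper's: the Leray sequence \eqref{eq:leray} identifies the cokernel of $\Br(K)\to\Br(X)$ with a subgroup of $H^1(K,\Pic(X_{K\sep}))$, and Swinnerton-Dyer's classification of the possible values ($\Z/2$, $\Z/2\times\Z/2$, $\Z/3$, $\Z/3\times\Z/3$) shows this group is killed by $6$. The proposal is correct and matches the paper's reasoning, including the observation that landing only in a subgroup of $H^1$ is harmless.
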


We remark that there is a general method to obtain bounds on the
torsion exponent of the Brauer group of a smooth proper variety $X$
over a field $K$ from torsion exponent bounds on the Chow group
$A_0(X)$ of 0-cycles of degree 0.  By a generalization of an argument
of Merkurjev, see
\cite[Thm.~1.4]{ACTP:unramified_cohomology_cubic_fourfold}, if
$A_0(X)$ is universally $N$-torsion (this is related to the
\linedef{torsion order} of $X$, cf.\ \cite{chatzistamatiou_levine})
then the cokernel of the map $\Br(K) \to \Br(X)$ is killed by $N\,
I(X)$, where $I(X)$ is the index of $X$, the minimal degree of a
0-cycle.  For a smooth projective cubic surface $X_K$ over a field
$K$, the Chow group $A_0(X_K)$ of 0-cycles of degree 0 is universally
6 torsion by an argument going back to Roitman, see
\cite[Prop.~4.1]{chatzistamatiou_levine}.  The index of a cubic
hypersurface always divides 3, as one can see by cutting with a line.
Thus, in the case of cubic surfaces, the Galois cohomology
computations of Swinnerton-Dyer achieve a better bound than that which
can be obtained by the method involving 0-cycles.

Given a smooth cubic surface $X$ over a field $K$, we can also bound
the kernel of the map $\Br(K) \to \Br(X)$, otherwise known as the
\linedef{relative Brauer group} $\Br(X/K)$.  The following must be
well known to the experts, but we could not find a precise reference
in the literature.

\begin{prop}
\label{prop:Brauer_pullback}
Let $X$ be a smooth proper $K$-variety.  Then the kernel of the map
$\Br(K) \to \Br(X)$ is killed by the index $I(X)$. 
\end{prop}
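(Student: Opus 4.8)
The statement to prove is Proposition \ref{prop:Brauer_pullback}: for a smooth proper $K$-variety $X$, the kernel of $\Br(K) \to \Br(X)$ is killed by the index $I(X)$. The plan is to use a corestriction–restriction (transfer) argument. By definition of the index, there is a closed point $P \in X$ whose residue field $L = \kappa(P)$ is a finite extension of $K$ of degree $I(X)$ (or more precisely, a $0$-cycle of degree $I(X)$; one can reduce to a single closed point by splitting into the union over the points in the support and arguing componentwise, since the corestriction is additive over the pieces of a $0$-cycle).

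\emph{Key steps.} First I would observe that the residue map at $P$, or simply the evaluation $\Br(X) \to \Br(\kappa(P)) = \Br(L)$ obtained by pulling back along $\Spec L = P \hookrightarrow X$, fits into a commutative square with the structural pullback: the composite $\Br(K) \to \Br(X) \to \Br(L)$ equals the restriction map $\mathrm{res}_{L/K} : \Br(K) \to \Br(L)$. Here smoothness of $X$ is used only to ensure $\Br$ behaves well and that $X$ is regular so that we may evaluate Brauer classes at a closed point (alternatively one works with a regular point of $X$ in the support of the degree-$I(X)$ cycle; a smooth proper variety always has such points in the relevant residue fields). Second, I would invoke the standard fact that for a finite field extension $L/K$ of degree $d = [L:K]$, the composite $\cor_{L/K} \circ \mathrm{res}_{L/K} : \Br(K) \to \Br(K)$ is multiplication by $d$. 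Third, take $\alpha \in \ker(\Br(K) \to \Br(X))$. Then $\mathrm{res}_{L/K}(\alpha)$ is the image of $\alpha$ under $\Br(K) \to \Br(X) \to \Br(L)$, hence is zero since $\alpha$ maps to $0$ in $\Br(X)$. Applying $\cor_{L/K}$ gives $0 = \cor_{L/K}(\mathrm{res}_{L/K}(\alpha)) = d \cdot \alpha = I(X) \cdot \alpha$. This proves the claim.

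\emph{Reduction to a point and main obstacle.} The one genuinely delicate point is the reduction from a $0$-cycle of degree $I(X)$ to a single finite extension: the index is realized by a $0$-cycle $z = \sum n_i [P_i]$ with $\sum n_i [\kappa(P_i):K] = I(X)$, and the $n_i$ need not all be positive. The clean way around this is: for each closed point $P_i$ the argument above shows $[\kappa(P_i):K] \cdot \alpha = 0$ in $\Br(K)$, so $\alpha$ is killed by $\gcd_i [\kappa(P_i):K]$, and this gcd divides (in fact equals, by a Bézout argument on the degrees realized by closed points and the very definition of the index as the gcd of such degrees) $I(X)$. So the real content is just: (i) the commutativity of the transfer square, which is formal once one has evaluation at a closed point available, and (ii) identifying $\cor \circ \mathrm{res}$ with multiplication by the degree. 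Neither is hard; the main thing to be careful about is phrasing the index correctly as the gcd of degrees of closed points (equivalently the minimal degree of an effective $0$-cycle, equivalently of any $0$-cycle) so that the gcd argument legitimately yields $I(X)$ and not merely some multiple of it. No Leray spectral sequence or geometry of $X$ beyond the existence of closed points of the appropriate residue degrees is needed.
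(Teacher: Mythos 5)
Your argument is correct and is essentially the paper's own proof: restrict to a closed point, apply corestriction, use that $\cor_{L/K}\circ\mathrm{res}_{L/K}$ is multiplication by $[L:K]$, and conclude via the identification of $I(X)$ with the gcd of degrees of closed points. Your extra care about $0$-cycles with negative coefficients is resolved exactly as in the paper, by the gcd/B\'ezout observation, so there is no substantive difference.
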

\begin{proof}
Let $x \in X$ be a closed point with residue field $L/K$.  We recall
the existence of a corestriction homomorphism $\cor_{L/K} : \Br(L) \to
\Br(K)$ with the property that the composition $\Br(K) \to \Br(L) \to
\Br(K)$ is multiplication by the degree $[L:K]$.  The
algebra-theoretic norm (determinant of the left-regular
representation) yields a norm homomorphism $n_{L/K} : R_{L/K}\Gm \to
\Gm$ of group schemes over $K$.  The corestriction homomorphism is
then the composition
$$
\Br(L) = H^2(L,\Gm) = H^2(K,R_{L/K}\Gm) \mapto{H^2(n_{L/K})}
H^2(K,\Gm) = \Br(K).
$$
When $L/K$ is separable, this coincides with the classical
corestriction map on Galois cohomology; when $L/K$ is purely
inseparable, this is simply multiplication by the degree.  Then define a map
$\Br(X) \to \Br(L) \to \Br(K)$ by restriction to the point $x \in X$
followed by corestriction.  This map has the property that the
composition $\Br(K) \to \Br(X) \to \Br(K)$ is multiplication by the
degree of the point~$x$. In particular, the kernel of the map $\Br(K)
\to \Br(X)$ is killed by the degree of~$x$.  Since the index $I(X)$
agrees with the greatest common divisor of the degrees of closed
points, we arrive at the claim.
\end{proof}

We remark that given a 0-cycle $z = \sum_i a_i z_i$ of minimal degree
$I(X)$, we can define (in analogy with the proof of
Proposition~\ref{prop:Brauer_pullback}) a map $\Br(X) \to \Br(K)$ by
$\alpha \mapsto \sum_i a_i\, \cor_{K(z_i)/K}(\alpha|_{z_i})$, where
$\alpha|_{z_i}$ is the restriction of the Brauer class to the closed
point $z_i$.  This map has the property that the composition $\Br(K)
\to \Br(X) \to \Br(K)$ is multiplication by the index $I(X)$.
Finally, answering a question of Lang and Tate from the late 1960s, it
is a result of Gabber, Liu, and Lorenzini~\cite{gabber_liu_lorenzini:index}
that there always exists a 0-cycle of minimal degree on a smooth
proper variety $X$ whose support is on points with separable residue
fields.

Note that since a smooth cubic surface always has index dividing 3 (by
cutting with a line), we see that the relative Brauer group $\Br(X/K)$
is always killed by 3.

We remark that the relative Brauer group of a smooth cubic surface can
be nontrivial.  Indeed, if $X$ admits a Galois invariant set of 6
non-intersecting exceptional curves, then $X$ is the blow up of a
Severi--Brauer surface along a point of degree 6.  If this
Severi--Brauer surface has nontrivial Brauer class $\alpha \in
\Br(K)[3]$ then the relative Brauer group $\Br(X/K)$ is generated by
$\alpha$.  In general, for a smooth cubic surface $X$ over $K$, by
\eqref{eq:leray}, the relative Brauer group
$\Br(X/K)$ is isomorphic to $H^0(K,\Pic(X_{K\sep}))/\Pic(X)$, hence is
a finite elementary abelian $3$-group.

We also remark that the bound in
Proposition~\ref{prop:Brauer_pullback} is not sharp in general.
Indeed, let $Q$ be a smooth projective quadric surface with nontrivial
discriminant (i.e., Picard rank 1) and no rational point over a field
$K$ of characteristic not 2.  Then $I(Q)=2$ yet the map $\Br(K) \to
\Br(Q)$ is injective, cf.\
\cite[Thm.~3.1]{ACTP:unramified_cohomology_cubic_fourfold}.

Now we will consider the subgroup $\im(\Br(K) \to \Br(k(X)) \cap
\Brur(k(X)/k)$ using a method going back to Colliot-Th\'el\`ene and
Ojanguren~\cite{colliot-thelene_ojanguren:Artin-Mumford}.  Consider
the following commutative diagram, for $\ell$ a prime different from
the characteristic:
{\small
\begin{gather}
\label{diag:basic}
\xymatrix@R=15pt{
0 \ar[r] & \Hur^2 (k (X)/X , \mu_\ell) \ar[r] & \Hur^2(k (X)/K, \mu_\ell) 
\ar[r]^{\oplus \partial^2_{T} \quad\quad} & \bigoplus_{T \in X^{(1)}_S} H^1 (k (T), \Z /\ell) \\
            & 0=\Hur^2(K/k,\mu_\ell) \ar[r] & H^2 (K, \mu_\ell)
            \ar[r]^(.42){\oplus \partial^2_C}  \ar[u]^(.45){\iota }
            & \bigoplus_{C \in S^{(1)}} H^1 (k (C), \Z
            /\ell)\ar[u]^(.38){\tau}
}
\end{gather}
}

We first explain the new pieces of notation: $\Hur^2 (k (X)/X ,
\mu_\ell)$ denotes all those classes in $H^2 (k(X), \mu_\ell)$ that
are unramified with respect to divisorial valuations corresponding to
prime divisors (integral threefolds) on $X$.
Moreover, $\Hur^2 (k (X)/K, \mu_\ell)$ is the subset of those
classes in $H^2 (k(X), \mu_\ell)$ that are unramified with respect to
divisorial valuations that are trivial on $K$, hence correspond to
prime divisors of $X$ dominating the base $S$.

In the upper row, $T$ runs over all integral threefolds, i.e.,
prime divisors, in $X$ that do not dominate the base $S$, hence map to
some curve in $S$. We call this set of integral threefolds
$X^{(1)}_S$. Then the upper row is exact by definition.

In the lower row, $C$ runs over the set $S^{(1)}$ of all integral
curves in $S$.  Thus this row coincides with the usual Bloch--Ogus
complex for degree 2 \'etale cohomology associated to $S$.  The $i$th
cohomology group of this complex is computed by the Zariski cohomology
$H^i(S,\HH^2)$ of the sheaf $\HH^i$, which is the sheafification of
the Zariski presheaf $U \mapsto \Het^2(U,\mu_\ell)$, see
\cite[Thm.~6.1]{bloch_ogus}.  In particular, the lower row is exact in
the first two places because $H^0(S,\HH^2) = \Br(S)[\ell] =0$ and
$H^1(S,\HH^2) \subset \Het^3(S,\Z/\ell)=0$ since $S$ is a smooth
projective rational surface, where the later inclusion arises from the
sequence of low terms associated to the Bloch--Ogus spectral sequence
$H^i(S,\HH^j) \Rightarrow
\Het^{i+j}(S,\mu_\ell)$.

Now we discuss the vertical arrows.  The map $\iota$ is the usual
restriction map in Galois cohomology associated to the field extension
$k(X)/K$.  When $X|_C$ is generically reduced over $C$ and $T \subset
X|_C$ is a component, the map $\tau$ is the usual restriction map in
Galois cohomology for the field extensions $k(T)/k(C)$.
When $T$ is the reduced subscheme of a component of $X|_C$, then the
map $\tau$ involves multiplication by the multiplicity.

If we want to understand classes $\xi \in H^2(K,\mu_\ell)$ such that
$\iota(\xi) \in \Hur^2(k(X)/X,\mu_\ell)$, then $\partial_T^2\iota(\xi)
= 0$ for all $T$ as in the diagram.  By the commutativity of the
central square in the diagram, we see that then $\tau(\partial^2_C
\xi)=0$ for all integral curves $C$ in $S$.  Thus we are interested in
the kernel $\ker(\tau)$, which we call $\KK_\ell$.  Clearly,
$\KK_\ell$ is the direct sum of the kernels of all restriction maps
$\tau : H^1(k(C),\Z/\ell) \to H^1(k(T),\Z/\ell)$.  The following lemma
becomes relevant.

\begin{lemm}\label{lem:pullback}
Let $Z$ be a proper integral $k$-variety with function
field $K$.  Let $l$ be the separable closure of $k$ inside $K$, i.e.,
the field extension of $k$ determined by the finite Galois set of
irreducible components of $Z \times_k k\sep$.  Then the kernel of the
restriction map $H^1(k,\Z/\ell) \to H^1(K,\Z/\ell)$ coincides with the
kernel of the restriction map $H^1(k,\Z/\ell) \to H^1(l,\Z/\ell)$.  In
particular, if $Z$ is geometrically integral, then the restriction
map $H^1(k,\Z/\ell) \to H^1(K,\Z/\ell)$ is injective.
\end{lemm}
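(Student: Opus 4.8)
The plan is to reduce the statement to a purely field-theoretic fact about $H^1(-,\Z/\ell)$, which classifies $\Z/\ell$-torsors, equivalently (via the Kummer/Artin--Schreier description, or simply as continuous homomorphisms from the absolute Galois group) cyclic extensions of degree dividing $\ell$ together with a choice of generator. First I would observe that the restriction map $H^1(k,\Z/\ell)\to H^1(K,\Z/\ell)$ factors as $H^1(k,\Z/\ell)\to H^1(l,\Z/\ell)\to H^1(K,\Z/\ell)$, since $k\subset l\subset K$. So it suffices to prove that the second map $H^1(l,\Z/\ell)\to H^1(K,\Z/\ell)$ is injective, i.e., that no nontrivial class over $l$ dies over $K$. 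Equivalently, a class $\alpha\in H^1(k,\Z/\ell)$ lies in the kernel over $K$ if and only if it already lies in the kernel over $l$.

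The key input is that $l$ is \emph{algebraically closed} inside $K$: by construction $l$ is the separable closure of $k$ in $K$, and since $Z$ is integral over $k$, any element of $K$ algebraic and separable over $k$ lies in $l$; an inseparable such element would contribute trivially to $H^1(-,\Z/\ell)$ anyway as $\ell\neq\operatorname{char}k$ forces us to only care about the prime-to-$p$ part. So I would argue: a nonzero class $\alpha\in H^1(l,\Z/\ell)$ corresponds to a nontrivial $\Z/\ell$-torsor over $\Spec l$, i.e., to a Galois extension $l'/l$ that is cyclic of order $\ell$ (when $\alpha$ has exact order $\ell$; the general case follows since any nonzero class has order exactly $\ell$ as $\Z/\ell$ has prime order). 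If $\alpha$ became trivial over $K$, then $l'\tensor_l K$ would be a split étale $K$-algebra, i.e., $l'\tensor_l K\isom K^{\oplus\ell}$; in particular $l'$ would embed into $K$ over $l$, hence over $k$. But $l'/l$ is separable, so $l'/k$ is separable, and $l'\subset K$ would then be a separable subextension of $K/k$ strictly larger than $l$ (since $[l':l]=\ell>1$), contradicting the maximality of $l$ as the separable closure of $k$ in $K$. Therefore $\alpha=0$, proving injectivity of $H^1(l,\Z/\ell)\to H^1(K,\Z/\ell)$ and hence the identification of kernels claimed in the lemma.

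For the final sentence: if $Z$ is geometrically integral then $Z\times_k k\sep$ is irreducible, so the finite Galois set of its components is a single point, which forces $l=k$; the kernel of $H^1(k,\Z/\ell)\to H^1(k,\Z/\ell)$ is trivial, giving injectivity of $H^1(k,\Z/\ell)\to H^1(K,\Z/\ell)$.

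I expect the main obstacle to be bookkeeping rather than conceptual: one must be careful that $l$ really is the integral closure of $k$ in $K$ (restricted to separable elements), which uses integrality of $Z$ — this is where properness and integrality of $Z$ enter, via the standard fact that for an integral $k$-scheme $Z$ the separable algebraic closure of $k$ in $\Gamma(Z,\OO_Z)$, equivalently in $K=k(Z)$, is finite over $k$ and its $\Spec$ is exactly the "set of geometric components" torsor, i.e. $\pi_0$ of $Z\times_k k\sep$ with its Galois action. One should also note that the argument is insensitive to whether we phrase $H^1$ with $\mu_\ell$ or $\Z/\ell$ coefficients, since over the relevant fields (containing enough roots of unity is not needed — $\Z/\ell$ coefficients are the correct constant-sheaf choice here and match the Bloch--Ogus complex in \eqref{diag:basic}), and the entire argument is about constant étale sheaves pulled back along $\Spec K\to\Spec k$.
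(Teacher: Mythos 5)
Your argument is correct, and in fact the paper offers no proof of this lemma at all (it is stated as a known fact), so your write-up supplies exactly the standard argument the authors are implicitly invoking: factor the restriction through $l$ and observe that $H^1(l,\Z/\ell)\to H^1(K,\Z/\ell)$ is injective because $l$ is separably closed in $K$, so a nontrivial cyclic degree-$\ell$ extension $l'/l$ cannot embed into $K$. One wording caveat: you assert that $l$ is \emph{algebraically} closed in $K$, which need not hold in positive characteristic (purely inseparable elements of $K$ algebraic over $k$ need not lie in $l$); what is true, and what your argument actually uses, is that $l$ is \emph{separably} closed in $K$ --- by transitivity of separability, $l'/l$ separable and $l/k$ separable force $l'/k$ separable, so an embedding $l'\hookrightarrow K$ over $l$ would contradict the maximality of the separable closure $l$. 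With that correction the proof is complete; note also that properness of $Z$ plays no role in the cohomological statement (it only enters the surrounding geometric discussion), and the identification of $l$ with the field of the Galois set of components of $Z\times_k k^s$ is part of the lemma's statement, so your deduction of the geometrically integral case ($l=k$) is immediate.
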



Recall that since the generic fiber $X_K$ of the cubic surface
fibration $\pi : X \to S$ is smooth, the locus in $S$ over which the
fibers become singular is the support of a divisor, the discriminant
divisor $\Delta \subset S$.  Thus if $C$ is not contained in the
discriminant divisor, then $T=X|_C \to C$ is a generically smooth
cubic surface fibration (in particular, there is a unique codimension
1 point of $X$ above the generic point of $C$), and
Lemma~\ref{lem:pullback} implies that $\tau$ is injective on the part
of the direct sum corresponding to $T\to C$.  Hence, we are only
interested in components of the discriminant divisor $\Delta = \cup
\Delta_i$.  However, for a general cubic surface fibration, we expect
that the generic fiber above a reduced component of the discriminant
has only isolated singularities, in particular is integral.  

Let $C$ be a component of the discriminant.  When $T$ is the reduced
subscheme of a component of $X|_C$ with multiplicity $e >1$, then the
map $\tau$ in diagram \eqref{diag:basic} will be zero if $\ell$
divides $e$.  Now assume that the generic fiber of $X|_C \to C$ is
reduced and admits an irreducible component that is not geometrically
integral. Considering the possible degenerations of a cubic surface in
$\P^3$, we see that the only possible cases are when the geometric
generic fiber above $C$ is a union of three planes or plane and an
irreducible quadric.  In the first case, the finite Galois set of
connected components of the geometric generic fiber of $X|_C \to C$ is
isomorphic to the spectrum of an \'etale $k(C)$-algebra of degree 3.

\begin{lemm}\label{lem:H-S}
Let $K$ be a field and $L/K$ a finite separable extension of degree
$n=2,3$. Let $\ell$ be a prime 
and write $\KK_\ell = \ker(H^1(K,\Z/\ell) \to H^1(L,\Z/\ell))$.  Then
$\KK_\ell=0$ whenever $\ell \neq n$ or when $\ell=n=3$ and $L/K$ is not
Galois.  Otherwise, $\KK_\ell$ is generated by the class $[L/K] \in
H^1(K,\Z/\ell)$, when $n=\ell$ and $L/K$ is cyclic.
\end{lemm}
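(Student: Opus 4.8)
The plan is to compute the kernel $\KK_\ell = \ker(H^1(K,\Z/\ell) \to H^1(L,\Z/\ell))$ via the inflation-restriction (Hochschild--Serre) sequence and a Galois-theoretic case analysis on the extension $L/K$. Recall that $H^1(K,\Z/\ell) = \Hom_{\mathrm{cont}}(\Gamma_K, \Z/\ell)$ where $\Gamma_K = \mathrm{Gal}(K\sep/K)$, and similarly for $L$, with the restriction map being restriction of homomorphisms to the open subgroup $\Gamma_L \subset \Gamma_K$ of index $n$. So $\KK_\ell$ consists of those continuous homomorphisms $\Gamma_K \to \Z/\ell$ that vanish on $\Gamma_L$, i.e.\ that factor through $\Gamma_K/N$ where $N$ is the normal core (largest normal subgroup of $\Gamma_K$ contained in $\Gamma_L$). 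Equivalently, letting $M/K$ be the Galois closure of $L/K$ with group $G = \mathrm{Gal}(M/K)$ and $H = \mathrm{Gal}(M/L)$ of index $n$, we have $\KK_\ell = \Hom(G/\langle H^G\rangle, \Z/\ell) = \Hom(G^{\mathrm{ab}}/\mathrm{(image\ of\ }H), \Z/\ell)$, the characters of $G$ of order dividing $\ell$ that are trivial on $H$.

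First I would dispose of the case $n=2$: here $L/K$ is automatically Galois with $G = \Z/2$, $H = 1$, so $\KK_\ell = \Hom(\Z/2, \Z/\ell)$, which is $0$ if $\ell \neq 2$ and is $\Z/2 = \langle [L/K]\rangle$ if $\ell = 2$; this matches the claim. For $n = 3$, the Galois closure $M/K$ has group $G \subseteq S_3$ acting on the three cosets, and since $L/K$ has degree exactly $3$, $G$ is either $\Z/3$ (the cyclic, i.e.\ Galois, case, with $H = 1$) or $S_3$ (the non-Galois case, with $H \cong \Z/2$ a non-normal subgroup). If $G = S_3$, then $G^{\mathrm{ab}} = \Z/2$ but $H \cong \Z/2$ surjects onto $G^{\mathrm{ab}}$, so $\Hom(G^{\mathrm{ab}}/\mathrm{image}(H), \Z/\ell) = 0$ for every $\ell$; this covers ``$\ell = n = 3$ and $L/K$ not Galois'' as well as any $\ell$ in the non-Galois cubic case. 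If $G = \Z/3$ with $H = 1$, then $\KK_\ell = \Hom(\Z/3, \Z/\ell)$, which is $0$ if $\ell \neq 3$ and is $\Z/3$ if $\ell = 3$; in the latter case the nontrivial characters are exactly the two primitive cubic characters cutting out $L$, i.e.\ $\KK_3 = \langle [L/K]\rangle$. Assembling these cases gives precisely the stated conclusion.

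The only genuinely delicate point is making sure the identification ``$\KK_\ell = $ characters of $G$ trivial on $H$'' is airtight, which amounts to the exactness of inflation-restriction at the $H^1$ level: a character $\chi: \Gamma_K \to \Z/\ell$ restricts to zero on $\Gamma_L$ iff it is inflated from $\mathrm{Gal}(M/K)$ where $M$ is the fixed field of $\ker\chi \cap \Gamma_L$; since $\chi$ has order dividing $\ell$, its kernel has index dividing $\ell$, and one checks the Galois closure $M$ of $L/K$ is large enough to see all such $\chi$ (indeed any $\chi \in \KK_\ell$ factors through $\Gamma_K / (\text{core of }\Gamma_L)$, and this quotient is exactly $G$ when $n \in \{2,3\}$ because $S_n$ for these $n$ has trivial or small enough structure — in fact $\Gamma_K/\mathrm{core} \hookrightarrow S_n$). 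I expect this bookkeeping with the normal core to be the main obstacle, but it is purely group-theoretic and short; everything else is an immediate reading-off from the subgroup structure of $\Z/2$, $\Z/3$, and $S_3$. Note finally that the identification of the generator of $\KK_\ell$ with the class $[L/K] \in H^1(K,\Z/\ell)$ in the cyclic case is just the definition of $[L/K]$ as (a choice of) the character cutting out the cyclic extension $L$, so no further work is needed there.
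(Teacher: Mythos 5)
Your argument is correct and is essentially the paper's proof: both use inflation--restriction (Hochschild--Serre) to identify $\KK_\ell$ with the characters of the Galois group of the Galois closure ($\Z/2$, $\Z/3$, or $S_3$) that are trivial on the subgroup fixing $L$, and then read the answer off case by case, your treatment of the non-Galois cubic case being the same $S_3$ computation that the paper packages as a comparison of kernels through the normal closure $N$. One small wording slip: vanishing on $\Gamma_L$ is not equivalent to merely factoring through $\Gamma_K/N$ with $N$ the normal core (the sign character of $S_3$ factors through $\Gamma_K/N$ but does not vanish on $\Gamma_L$); your subsequent formula $\mathrm{Hom}(G/\langle H^G\rangle,\Z/\ell)$ is the correct characterization and is what your computation actually uses, so nothing breaks.
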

\begin{proof}
If $L/K$ is a $G$-Galois extension of fields, then the sequence of low
degree terms of the Hochschild--Serre spectral sequence implies that
the kernel of the restriction map $H^1(K,\Z/\ell) \to H^1(L,\Z/\ell)$
is isomorphic to the group cohomology $H^1(G,\Z/\ell)$.  When
$n=\ell$ and $L/K$ is cyclic, this shows that the kernel of the
restriction is cyclic of order $\ell$, and since we already know that
the class $[L/K]$ becomes trivial, we are done.

Now consider the case when $L/K$ is cubic and not Galois, in which
case the normal closure $N/K$ of $L/K$ is an $S_3$-Galois extension.
Hence the kernel of the composition of restriction maps
$H^1(K,\Z/\ell) \to H^1(L,\Z/\ell) \to H^1(N,\Z/\ell)$ is isomorphic
to the group cohomology $H^1(S_3,\Z/\ell)$, which has order 2 when
$\ell=2$ and is trivial for all primes $\ell \neq 2$.  In the case
$\ell=2$, we know that $N/L$ is degree 2 so that the kernel of the
restriction map $H^1(L,\Z/2) \to H^1(N,\Z/2)$ already has order 2.
Hence the map $H^1(K,\Z/2) \to H^1(L,\Z/2)$ is injective.
\end{proof}

We combine these considerations to arrive at a sufficient condition
for the triviality of $\im(\Br(K) \to \Br(k(X))) \cap \Brur(k(X)/k)$.

\begin{theo}
\label{thm:LST}
Let $\pi : X \to S$ be a flat cubic surface bundle over a smooth
projective connected rational surface $S$ over an algebraically closed
field $k$ of characteristic not dividing 6.  Assume that the generic
fibers of $\pi$ over irreducible curves $C \subset S$ are reduced.
Then $\im(\Br(K)[2] \to \Br(k(X)) \cap \Brur(k(X)/k)$ is trivial.

If we furthermore assume that the generic fibers of $\pi$ over
irreducible curves $C \subset S$ are never geometrically the union of
three planes permuted cyclically by Galois, then $\im(\Br(K)[3] \to
\Br(k(X)) \cap \Brur(k(X)/k)$ is trivial.
\end{theo}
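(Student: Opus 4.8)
The plan is to run the diagram \eqref{diag:basic} backwards, in the spirit of Colliot-Th\'el\`ene and Ojanguren. Fix $\ell\in\{2,3\}$, taking $\ell=3$ only under the second hypothesis; since $k$ is algebraically closed of characteristic prime to $\ell$ and $\Pic(\Spec K)=0$, we have $\Br(K)[\ell]=H^2(K,\mu_\ell)$. A class in $\im(\Br(K)[\ell]\to\Br(k(X)))\cap\Brur(k(X)/k)$ is $\iota(\xi)$ for some $\xi\in H^2(K,\mu_\ell)$, and because $\Brur(k(X)/k)[\ell]\subseteq\Hur^2(k(X)/X,\mu_\ell)$ we get $\partial^2_T\iota(\xi)=0$ for all $T\in X^{(1)}_S$. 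Chasing the central square of \eqref{diag:basic} shows $\partial^2_C\xi\in\ker(\tau|_C)$ for every $C\in S^{(1)}$, where $\tau|_C$ denotes the restriction of $\tau$ to the summand $H^1(k(C),\Z/\ell)$. If each $\tau|_C$ is injective, then $\partial^2_C\xi=0$ for all $C$, so $\xi$ lies in $\ker(\oplus\partial^2_C)=H^0(S,\HH^2)=\Br(S)[\ell]$, which vanishes since $S$ is a smooth projective rational surface; hence $\iota(\xi)=0$. So the whole proof reduces to the injectivity of every $\tau|_C$.

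I would establish this by a case analysis on the generic fibre $F_C$ of $X|_C\to C$, which is reduced by hypothesis. If $C$ is not contained in the discriminant $\Delta$, then $F_C$ is a smooth, hence geometrically integral, cubic surface over $k(C)$, there is a unique prime divisor $T$ of $X$ above $C$, and Lemma~\ref{lem:pullback} gives that $\tau|_C\colon H^1(k(C),\Z/\ell)\to H^1(k(T),\Z/\ell)$ is injective; the same holds whenever $C\subseteq\Delta$ but $F_C$ is still geometrically integral. Otherwise $F_C$ is reduced and geometrically reducible, and since $3=1+2=1+1+1$ are the only partitions, geometrically it is a plane together with an irreducible (hence geometrically integral) quadric, or a union of three distinct planes. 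In the plane-plus-quadric case the plane is the unique geometric component of degree one, hence Galois-stable and defined over $k(C)$, so $X|_C$ has a geometrically integral component $T$ and the $T$-component of $\tau|_C$ is injective by Lemma~\ref{lem:pullback}. In the three-planes case the set of the three planes corresponds to an \'etale $k(C)$-algebra $A$ of degree $3$ (all multiplicities equal $1$ because $F_C$ is reduced); if $A$ has a factor isomorphic to $k(C)$, one of the planes is defined over $k(C)$ and we conclude exactly as before.

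The one genuinely delicate case is $A=L$ a cubic field extension of $k(C)$: then $X|_C$ is irreducible over $k(C)$, there is a single prime divisor $T=X|_C$ above $C$, and the separable closure of $k(C)$ inside $k(T)$ is $L$ (the field attached to the Galois set of the three geometric planes). By Lemma~\ref{lem:pullback}, $\ker(\tau|_C)=\ker(H^1(k(C),\Z/\ell)\to H^1(L,\Z/\ell))$, and Lemma~\ref{lem:H-S} with $n=3$ evaluates it: it is zero when $\ell=2$, and when $\ell=3$ with $L/k(C)$ not Galois; it is cyclic of order $3$ precisely when $\ell=3$ and $L/k(C)$ is cyclic. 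But $L/k(C)$ cyclic of degree $3$ is exactly the situation in which Galois permutes the three planes cyclically, which the extra hypothesis of the second assertion forbids. Collecting the cases, $\tau|_C$ is injective for every $C$: unconditionally when $\ell=2$, and under the hypothesis when $\ell=3$, which yields both statements.

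I expect the main obstacle to be purely organizational rather than conceptual: one must be sure that the classification of reduced, geometrically reducible cubic surfaces in $\P^3$ is exhausted by the types $1+2$ and $1+1+1$; that reducedness of $F_C$ (where one uses $\mathrm{char}\,k\nmid 6$ to exclude inseparable phenomena) genuinely rules out multiplicities, so that $\tau$ never involves multiplication by $\ell$; and, most importantly, that ``three planes permuted cyclically by Galois'' corresponds exactly to ``$L/k(C)$ cyclic of degree $3$'', since this is the precise input needed to feed the three-planes case into Lemma~\ref{lem:H-S}. Once this dictionary is set up, the remainder is a direct chase through \eqref{diag:basic} combined with the two lemmas.
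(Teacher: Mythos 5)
Your proposal is correct and takes essentially the same approach as the paper: the same chase through diagram \eqref{diag:basic} reducing everything to injectivity of $\tau$ on each summand, the same appeal to Lemma~\ref{lem:pullback} and Lemma~\ref{lem:H-S}, and the same case analysis of reduced degenerations of a cubic surface (geometrically integral, plane plus irreducible quadric, three planes with the cyclic cubic case excluded by the extra hypothesis when $\ell=3$). The only cosmetic difference is direction: you assume the class is unramified and conclude it vanishes via $H^0(S,\HH^2)=0$, whereas the paper starts from a nontrivial $\alpha\in\Br(K)$ and exhibits a component $T$ over some curve $C$ where $\iota(\alpha)$ has a nontrivial residue.
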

\begin{proof}
Let $\alpha \in \Br(K)$ be a nontrivial element, then there is an
irreducible curve $C \subset S$ such that the residue of $\alpha$ at
$C$ is nontrivial.  We argue that there is always an irreducible
component $T \subset X|_C$ of the fiber of $\pi$ above $C$ such that
the map $\tau : H^1(k(C),\Z/\ell) \to H^1(k(T),\Z/\ell)$, for either
$\ell=2$ in the first case or $\ell=3$ in the second case, is
injective.  Hence $\iota(\alpha)$ will have a nontrivial residue at a
valuation of $k(X)$ centered on $k(T)$, so $\iota(\alpha)$ is not in
$\Brur(k(X)/k)$.  Indeed, if the generic fiber of $X|_C \to C$ is
geometrically irreducible, the injectivity follows from
Lemma~\ref{lem:pullback}; if this generic fiber is not geometrically
irreducible, then by the discussion above, there are only the
following possible cases.  Either the generic fiber of $X|_C \to C$
has an irreducible component that is a plane; in this case, we take
$T$ equal to this component and again apply Lemma~\ref{lem:pullback}.
Or, the generic fiber of $X|_C \to C$ is irreducible and is
geometrically the union of three planes; then by assumption, we are in
the situation of Lemmas~\ref{lem:pullback} and~\ref{lem:H-S}, and we
can take $T = X|_C$.
\end{proof}

For an application of Theorem~\ref{thm:LST}, see Remark~\ref{rem:nonconstant}.

\section{Quadric surface bundles with polygonal discriminant}
\label{sec:poly}

In this section, we construct new reference quadric surface bundle
fourfolds.

\subsection{Polygonal discriminant}

Let $C\subset \P^2$ be a conic and let $P_{m}$ be a regular $m$-gon,
for $m\geq 3$, such that $C$ is inscribed in $P_{m}$, all defined over
an algebraically closed field $k$ of characteristic zero. Let $L_1,
\dotsc, L_m\subset \P^2$ be the lines corresponding to the sides of the
$m$-gon and let $\ell_1, \dotsc, \ell_m$ be the linear forms defining
these lines. We assume that the conic is defined by
$$F(x,y,z):=x^2+y^2+z^2-2xy-2xz-2yz=0,$$
to make the notations consistent with  \cite{HPT, HPT-double}. 
Let $a,b,c,d$ be the arbitrary products of some of the linear forms $\ell_1, \dotsc, \ell_m$, satisfying the following  conditions:
\begin{align}\label{conditions}
\begin{split}
& abcd=(\ell_1\dotsm\ell_m)^2,\\ 
& \mbox{the degrees of }a,b,c,d  \mbox{ are either all even or all odd,}\\
& a,b,c,d\mbox{ are square free and no two are equal.}\\
\end{split}
\end{align}
Put $n=m+1$ and consider  $X_n\to \P^2$ defined by
\begin{equation}\label{eproj}
as^2+bt^2+cu^2+dFv^2=0.
\end{equation}
Then $X_n$ is a quadric surface bundle, in a weak sense (i.e., no
flatness condition is assumed, see \cite{Sch1,Sch2}), that is a
relative hypersurface in a projective bundle over $\P^2$.  Note that
by this construction we obtain all possible degrees
$d_0=\mathrm{deg}\,a, d_1=\mathrm{deg}\,b, d_2=\mathrm{deg}\,c,
d_3=\mathrm{deg}\,dF$, either all even or odd, at most one degree is
zero, with the following restriction
\begin{align}\label{restriction}
\begin{split}
&d_0+d_1+d_2+d_3=2n,\\ 
&d_0\leq n-1,\; d_1\leq n-1,\; d_2\leq n-1,\; 2\leq d_3\leq n+1.
\end{split}
\end{align}

\subsection{Second unramified cohomology group}

\begin{theo}\label{obs}
Let $X_n$ be defined as in \eqref{eproj}. Then 
$$
\Hur^2(k(X_n)/k, \Z/2)\neq 0.
$$
\end{theo}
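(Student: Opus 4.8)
The plan is to compute the unramified Brauer group of the quadric surface bundle $X_n \to \P^2$ by specializing the general formula of Colliot-Th\'el\`ene--Ojanguren type for quadric surface bundles over a rational surface, as developed in \cite{pirutka-survol}. Since the generic fiber of $X_n \to \P^2$ is the diagonal quadric $\langle a, b, c, dF\rangle$ over $K = k(x,y,z) = k(\P^2)$, its even Clifford algebra gives a class in $\Br(K)[2]$, or rather a pair of quaternion classes, and the relevant unramified class will be represented by the quaternion symbol $\alpha = (a/d, b/d) = (ab, ad)$ (up to the usual manipulations modulo squares and the relation $abcd = (\ell_1\cdots\ell_m)^2$, which says $cd \equiv ab$ in $K\mult/K\mult{}^2$). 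The first step is therefore to write down this candidate class $\alpha \in H^2(K, \Z/2)$ explicitly and to verify it is nonzero in $\Br(K)$ — this is easy since $a, b, c, d$ are square-free, pairwise distinct products of the distinct linear forms $\ell_i$, so the symbol does not vanish.

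Next I would check that $\alpha$ is unramified on a smooth projective model of $X_n$. The mechanism is exactly the one recalled in the diagram \eqref{diag:basic} and the surrounding discussion: one must show that for every irreducible curve $C \subset \P^2$ at which $\alpha$ has a nontrivial residue $\partial_C \alpha \in H^1(k(C), \Z/2)$, this residue dies when pulled back to (the reduced subscheme of) the fiber $X_n|_C$. The curves where $\alpha = (ab, ad)$ ramifies are among the lines $L_1, \dots, L_m$ and the conic $C = \{F = 0\}$. Along a side $L_i$, exactly two of $a, b, c, d$ contain the factor $\ell_i$ (because $abcd = (\ell_1\cdots\ell_m)^2$ and the $a,b,c,d$ are square-free, each $\ell_i$ appears in exactly two of them), so the diagonal quadric becomes, generically along $L_i$, a form of rank $2$ whose nonzero part is a constant times $\langle \ell_i, \ell_i\rangle$-scaled pair — meaning the fiber $X_n|_{L_i}$ has a component that is geometrically a pair of planes, and the residue of $\alpha$ at $L_i$ becomes trivial on restriction to this component since the two ``surviving'' coefficients agree modulo squares along $L_i$. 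Along the conic $\{F=0\}$, only $dF$ vanishes, the generic fiber is a rank-3 quadric, and one checks the residue of $\alpha$ there is trivial to begin with (or is killed because $F$ restricts appropriately). One also must handle the polygon vertices $L_i \cap L_j$ and the six tangency points $C \cap L_i$, but these are codimension $2$ in $\P^2$ and only affect whether one needs a careful model; the unramifiedness is a codimension-one condition, so the above suffices.

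The heart of the matter, and the step I expect to be the main obstacle, is the nonvanishing: showing $\alpha \neq 0$ in $\Brur(k(X_n)/k)$, i.e., that $\iota(\alpha) \in H^2(k(X_n), \Z/2)$ is not the trivial class. It is not enough that $\alpha \neq 0$ in $\Br(K)$; one must rule out $\alpha$ becoming trivial after base change to $k(X_n)$, equivalently that the diagonal quadric $\langle a, b, c, dF\rangle$ is not split by $k(X_n)$ in the relevant way. The standard approach, following \cite{pirutka-survol} and \cite{HPT}, is to exhibit a discrete valuation (a curve in $\P^2$, or a curve in a blow-up) at which the residue of $\alpha$ is nontrivial even after one understands the full ramification — i.e., to show $\alpha \notin \im(\Br(k) \to \Br(K)) = 0$ is witnessed by a residue that the fibration cannot kill because at that curve the fiber is geometrically \emph{integral}. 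Concretely, I would pick a generic line $\ell \subset \P^2$ not among the $L_i$ and not tangent to $C$; the residue $\partial_\ell(\alpha|_{\text{something}})$ computation together with Lemma~\ref{lem:pullback} (applied to the geometrically integral generic fiber over such $\ell$, or rather to a cleverly chosen auxiliary curve lying over the ramification locus) shows the class survives. Equivalently, one invokes the general formula of \cite{pirutka-survol}: the unramified Brauer group of such a diagonal quadric bundle is computed by a combinatorial recipe on the configuration of the discriminant components and their residues, and for the $(n-1)$-gon-of-double-lines-plus-inscribed-conic configuration with the stated coprimality/square-free conditions \eqref{conditions}, that recipe yields a nonzero group — this is precisely the generalization of the $X_4$ computation of \cite{HPT}. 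I would structure the write-up to reduce to citing \cite[the relevant formula]{pirutka-survol} once the residue profile along each $L_i$ and along $C$ has been tabulated, so that the only genuine computation is the bookkeeping of which linear forms divide which of $a,b,c,d$ and hence what $\partial_{L_i}\alpha$ is.
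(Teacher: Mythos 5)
Your overall strategy coincides with the paper's: the actual proof takes the Clifford invariant $\alpha=(a,b)+(c,dF)+(ab,cdF)$ of the generic fiber, observes that the tangency of $C$ with the lines makes the residue $\partial^2_C(\alpha)$ trivial, so that $\mathrm{ram}\,\alpha$ is the simple normal crossings union $L_1\cup\dotsm\cup L_m$, and then cites \cite[Thm.~3.17]{pirutka-survol}, whose conclusion (the diagonal class $(1,\dotsc,1)$ of residues gives a nonzero element of the group $H$) already contains both the unramifiedness and the nonvanishing. The genuine gap in your proposal is the step you yourself call the heart of the matter. An unramified class has trivial residues at every divisorial valuation, so you cannot certify that $\iota(\alpha)\neq 0$ in $\Hur^2(k(X_n)/k,\Z/2)$ by exhibiting a residue along a generic line $\ell$ not among the $L_i$ (there is none there), and Lemma~\ref{lem:pullback} only concerns injectivity of restriction maps on $H^1$ of residue fields, not injectivity of $H^2(K,\Z/2)\to H^2(k(X_n),\Z/2)$. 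The nonvanishing has to come from elsewhere: either it is read off from the cited theorem (the paper's route), or one argues that $\Br(K)\to\Br(K(X_n))$ is injective because the discriminant $abcdF\equiv F$ of the fiber is a nonsquare, and that $\alpha\neq 0$ in $\Br(K)$ because the conditions \eqref{conditions} force some residue $\partial_{L_i}\alpha$ to be a nonsquare in $k(L_i)$. So your ``equivalently, one invokes the general formula'' is not an alternative to your sketch; it \emph{is} the proof, and what remains is to verify its hypotheses.

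A second problem is that your mechanism for unramifiedness along the lines is misstated, and it hides the only place the tangency is actually used. If, say, $\ell_i\mid a,b$ and $a=\ell_i a'$, $b=\ell_i b'$, the residue of $\alpha$ at $L_i$ is the class of $a'b'$ in $k(L_i)^\times/(k(L_i)^\times)^2$, which is in general nontrivial, and the surviving coefficients $c$ and $dF$ do \emph{not} agree modulo squares along $L_i$: one has $c\,dF\equiv a'b'F$ there, and it is the tangency of $C$ with $L_i$ that makes $F|_{L_i}$ a square, so that the residue coincides with the discriminant of the surviving rank-two form and dies on the corresponding pair of planes. (The same tangency is what makes your candidate symbol $(a/d,b/d)$ agree with the Clifford invariant, their difference $(d,F)$ being everywhere unramified on $\P^2$ and hence trivial.) Finally, the codimension-two loci cannot be dismissed: unramifiedness must be checked for all divisorial valuations of $k(X_n)$ over $k$, including those centered over the vertices $L_i\cap L_j$, the tangency points, and the singular locus of $X_n$, and controlling exactly those valuations is the role of the simple-normal-crossings hypothesis and of the local analysis behind \cite[Thm.~3.17]{pirutka-survol}.
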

\begin{proof}
We apply \cite[Theorem 3.17]{pirutka-survol}. The generic fiber of the quadric bundle $X_n$ is given by a diagonal quadratic form $q\simeq \langle a,b,c,dF \rangle$. The Clifford invariant of $q$ is given by
$$\alpha=c(q)=(a,b)+(c,dF)+(ab, cdF).$$

From the construction, since the conic $C$ is tangent to the lines $L_1, \dotsc, L_m$, we deduce that the residue $\partial_C^2(\alpha)$ at the generic point of $C$ is trivial.  Hence  the ramification divisor  $ram\,\alpha$ is supported on the union of lines $L_1\cup\dotsm\cup L_m$, which is a simple normal crossings divisor.   Hence {\it loc. cit.}  applies and we deduce that $\Hur^2(k(X_n)/k, \Z/2)\neq 0$. 
More precisely, the image $\alpha'$ of $\alpha$ in $H^2(k(X_n), \Z/2)$  is a nontrivial element of the group $\Hur^2(k(X_n)/k, \Z/2)$:  with the notations in \cite[Theorem 3.17]{pirutka-survol}, we have $T=ram\, \alpha$, and the diagonal class $(1,\dotsc, 1)$, corresponding to the set of residues of $\alpha'$ gives an element  of the group $H$.
\end{proof}

\section{A cubic surface bundle with nontrivial unramified Brauer group}
\label{sec:cubic_example}

In this section, we construct an irreducible reference hypersurface
$Y$ of bidegree $(2,3)$ in $\P^2 \times \P^3$ over an algebraically
closed field $k$ of characteristic zero, thereby giving another proof
of Theorem~\ref{thm:main_intro}.  As a consequence, we also arrive at
an explicit example of a smooth cubic surface $X$ over $K=k(\P^2)$
such that the cokernel of the map $\Br(K) \to \Br(X)$ contains a
nontrivial 2-torsion class that is unramified on a fourfold model of
$X$ over $k$q.

Consider the following varieties:
\begin{itemize}
\item The weak quadric surface bundle $V\to \P^2$ with
polygonal discriminant defined by
\begin{equation}\label{qb}
u(u-v)(t-v)x^2+u(u-v)ty^2+tw^2+(t-v)F(u,t,v)z^2=0
\end{equation}
where $\P^2$ has homogeneous coordinates $(u : v : t)$, and where
$F(u,t,v)=0$ is the conic inscribed in the square formed by four
linear forms $\ell_1=u, \ell_2=u-v, \ell_3=t, \ell_4=t-v$. Explicitly,
we have
\begin{equation*}
F(u,t,v)=4u^2+4t^2+v^2-4uv-4tv;
\end{equation*}

\item The hypersurface $Y$ of bidegree $(2,3)$ in $\P^2\times \P^3$
defined by
\begin{equation}\label{bd23}
u(u-v)(t-v)x^2+u(u-v)ty^2+tw^2z^2+(t-v)F(u,t,v)z^2=0.
\end{equation}
where here, $\P^2$ has homogeneous coordinates $(x : y : z)$ and
$\P^3$ has homogeneous coordinates $(u : t : v: w)$.

\end{itemize}

Note that $V$ is birational to $Y$ since the open subset of $V$ defined by $z=1, u=1$ and the open subset of $Y$ defined by $z=1, u=1$ are given by the same affine equation in variables $v,t,w,x,y$.  

We need the following technical result, which will be proved in
\S\ref{subsec:birat}.

\begin{prop}\label{birtransform}
Let $V$ and $Y$ be defined as in \eqref{qb} and \eqref{bd23},
respectively.  Then there is a proper birational morphism $\pi:Y'\to
Y$ such that:
\begin{enumerate}
\item\label{prop:i} the rational map $Y\dashrightarrow V$ extends to a morphism $f:Y'\to V$;
\item\label{prop:ii} the maps $\pi$ and $f$ are universally $\CH_0$-trivial.
\end{enumerate}
\end{prop}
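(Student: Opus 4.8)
The plan is to make the birational map $Y \dashrightarrow V$ explicit, identify its indeterminacy locus, and resolve it by a sequence of blow-ups whose exceptional fibers are chains of rational varieties, so that universal $\CH_0$-triviality follows from \cite[Prop.~1.8]{CTP}. Comparing \eqref{qb} and \eqref{bd23}, the only difference is that the term $tw^2$ in $V$ (with $w$ a coordinate function, $t$ a form on $\P^2$) is replaced by $tw^2z^2$ in $Y$ (with $z$ now a coordinate on the first $\P^2$ and $w$ a coordinate on $\P^3$). In the chart $z=1$, $u=1$ both equations become the same affine hypersurface in $(v,t,w,x,y)$, as already noted; so the rational map is an isomorphism over this chart, and the work is entirely concentrated over the locus $z=0$ on the $Y$ side (and over $u=0$, which I would handle symmetrically or by a second coordinate chart). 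First I would write down the rational map in coordinates: away from $z=0$ it sends $((x:y:z),(u:t:v:w))$ to the point of $V$ with the same $(x:y:z)$ and with the fourth $\P^2$-bundle coordinate rescaled by $z$, i.e. roughly $(x:y:z) \times (u:t:v) \mapsto$ the quadric-bundle point $[x:y:w z :z]$ (in the fiber coordinates $(x,y,w,z)$ of $V$, where I am using that $V \subset \P(\mathcal{E})$ with $\mathcal{E}$ of rank $4$). The indeterminacy locus is where $z=0$ forces the image to be undefined; blowing up $Y$ along the appropriate subscheme supported over $z=0$ (and over $u=0$) will separate things.

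The key steps, in order: (1) Set up the two projective bundles carefully — $Y \subset \P^2 \times \P^3$ and $V \subset \P(\mathcal{E})$ over $\P^2$ — and write the rational map $Y \dashrightarrow V$ in the two standard affine charts, checking it is a well-defined isomorphism over $\{z \neq 0\}$ (equivalently over the locus where the $w$-coordinate can be legitimately rescaled). (2) Compute the closed indeterminacy locus $Z \subset Y$; I expect $Z$ to lie over $\{z=0\} \cup \{u=0\} \subset \P^2$ and to be cut out by the vanishing of $z$ together with the terms of \eqref{bd23} that survive when $z=0$, namely $u(u-v)(t-v)x^2 + u(u-v)ty^2 = 0$, i.e. a low-dimensional subvariety of $Y$. (3) Let $\pi \colon Y' \to Y$ be the blow-up of $Y$ along $Z$ (possibly followed by one or two further blow-ups along strict transforms of the remaining indeterminacy, exactly as in the local calculations in the proof of Proposition~\ref{prop:resolution_HPT_section}); by the universal property of blowing up, $f \colon Y' \to V$ is then a morphism, giving (\ref{prop:i}). (4) For (\ref{prop:ii}): $\pi$ is a blow-up of a (not necessarily smooth) center, but its scheme-theoretic fibers over any point, over any field extension, are projective spaces or chains/unions of rational varieties — these are universally $\CH_0$-trivial — so \cite[Prop.~1.8]{CTP} gives that $\pi$ is universally $\CH_0$-trivial. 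For $f$, note that over the open set $\{z \neq 0\}$ (pulled back to $V$ via the isomorphism) $f$ is an isomorphism, and over its complement — a locus lying over the lines $\{u=0\}$, $\{u=v\}$, etc. in $\P^2$ — the fibers of $f$ are again unions of rational varieties (one checks this directly in charts, the computation being of the same flavor as the blow-up charts displayed above), so again \cite[Prop.~1.8]{CTP} applies.

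The main obstacle I expect is step (2)–(3): getting the indeterminacy locus and the correct sequence of blow-up centers right so that $Y'$ is \emph{simultaneously} a resolution of the map to $V$ and has controlled (universally $\CH_0$-trivial) fibers over both $Y$ and $V$. The substitution $w^2 \rightsquigarrow w^2 z^2$ introduces a non-reduced structure along $\{z=0\}$ in $Y$ that is absent in $V$, so $Y$ and $V$ genuinely differ there and the blow-up must be chosen to absorb exactly this discrepancy; a naive single blow-up may leave residual indeterminacy or produce a non-$\CH_0$-trivial fiber, necessitating an iterated construction and a careful chart-by-chart verification (as in the two-step blow-up of $U$ above). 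A secondary subtlety is bookkeeping the $\P^3$-coordinate $w$ versus the relative coordinate on $\P(\mathcal{E})$: one must make sure the rescaling-by-$z$ is compatible with the glueing of $\mathcal{E}$ over $\P^2$, which amounts to checking the twist by the relevant line bundle is consistent, but this is routine once the charts are fixed.
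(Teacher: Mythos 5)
Your overall strategy (resolve the rational map by explicit blow-ups and check universal $\CH_0$-triviality of the fibers via \cite[Prop.~1.8]{CTP}) is the right one and is indeed how the paper proceeds, but there is a genuine gap in the two places where the actual work lies. First, you misidentify the indeterminacy locus. The rational map is $(u:v:t:w,\,x:y:z)\mapsto(u:v:t,\,x:y:wz:z)$, and the fiber coordinates $(x:y:wz:z)$ never all vanish on $\P^2\times\P^3$; so the map is a morphism wherever $(u,v,t)\neq(0,0,0)$ --- in particular it is perfectly well defined at a general point of $\{z=0\}$ (there it is merely not an isomorphism, contracting the $w$-direction). The true indeterminacy locus is the surface $\{u=v=t=0\}\cong\P^2\subset Y$, i.e.\ the fiber of $Y\to\P^3$ over the point $(0:0:0:1)$, not a locus over $\{z=0\}\cup\{u=0\}$ cut out by $z$ together with $u(u-v)(t-v)x^2+u(u-v)ty^2$. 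Consequently your proposed blow-up center is wrong, and "the universal property of blowing up" will not extend the map if you blow up a center that does not support the base ideal. The paper's $Y'$ is obtained by first blowing up the line $\{u=v=t=z=0\}$ and then the strict transform of $\{u=v=t=0\}$ (which appears as $u_1=v_1=t_1=0$ in the chart of the exceptional divisor given by $z=0$); the extension of the map must then be verified chart by chart, since after the first blow-up it is still undefined along that strict transform.

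Second, the universal $\CH_0$-triviality of $\pi$ is not a matter of fibers being "projective spaces or chains/unions of rational varieties": over points of the first blow-up center the fibers of $Y_1\to Y$ are \emph{singular cubic surfaces} (equation \eqref{cubicfiber}), and proving they are universally $\CH_0$-trivial --- including over the non-closed residue field at the generic point of the center --- is the technical heart of the proposition. The paper isolates this in Lemma~\ref{cubicCHtr}, which constructs explicit universally $\CH_0$-trivial resolutions of these cubic surfaces over an arbitrary field, with smooth rational exceptional divisors, and treats the degenerate cases (a unique singular point; a plane union a quadric) separately. Asserting without proof that all fibers are unions of rational varieties (which, even if true, would not by itself give universal $\CH_0$-triviality without control of the components, their fields of definition, and their intersections) skips exactly this step; the same remark applies to your treatment of the fibers of $f$, which in the actual construction one checks in each chart to be points or affine spaces.
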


With this result, we are ready to give our second proof of
Theorem~\ref{thm:main_intro}.

\begin{proof}[Proof of Theorem~\ref{thm:main_intro}]
By the specialization method \cite{voisin, CTP}, it is enough to
insure that $Y$ is a reference variety, i.e., that the conditions (O)
and (R) as in the introduction are satisfied for $Y$.  Since $Y$ is
birational to $V$, we have by Theorem~\ref{obs} that $\Hnr^2(k(Y)/k,
\Z/2)=\Hnr^2(k(V)/k, \Z/2)\neq 0$. Let $Y'$ be as in
Proposition~\ref{birtransform}. By Theorem~\ref{resolution} below,
there exists a $\CH_0$-trivial resolution $\tilde V \to V$. Let
$\tilde Y \to Y'$ be a resolution such that the rational map
$Y'\dashrightarrow \tilde V$ extends to a morphism $\tilde Y\to \tilde
V$. Since $\tilde Y$ and $\tilde V$ are smooth, the map $\tilde Y\to
\tilde V$ is universally $\CH_0$-trivial. This follows from weak
factorization and the fact that the blow up of a smooth variety along
a smooth center is a universally $\CH_0$-trivial morphism. We then
have the following diagram
$$
\xymatrix@R=14pt{
\tilde Y\ar[d]\ar[dr]&  \\
Y'\ar[d]\ar[dr]& \tilde V\ar[d]\\
Y& V,
}
$$ 
where we know that all the maps, except possibly $\tilde Y\to Y'$, are
universally $\CH_0$-trivial. From this diagram, we see that the map
$\tilde Y\to Y'$ is universally $\CH_0$-trivial, hence the composition
$\tilde Y\to Y'\to Y$ is a universally $\CH_0$-trivial resolution.
Finally, $Y$ is a reference variety.
\end{proof}

\begin{rema} Note that if $\phi$ is the composition $\phi:Y\dashrightarrow V\to \P^2$ and $U\subset Y$ is an open where $\phi$ is defined, then the  generic fiber of the map $\phi : U\to \P^2$ is not proper, so that we could not directly apply \cite[Theorem 9]{Sch1}.
\end{rema}

\begin{rema}
\label{rem:nonconstant}
A nice feature of the reference variety $Y$ is that the cubic surface
bundle $Y \to \P^2$ has as generic fiber $X=Y_K$ an explicit example
of a smooth cubic surface over $K=k(\P^2)$ with a nontrivial element
$\alpha \in \Br(X)[2]\neq 0$ that is not contained in the image of the
map $\Br(K) \to \Br(X)$ and $\alpha \in \Brur(k(X)/k)$ is globally
unramified on the function field of the fourfold $Y$.  The fact that
$\alpha$ is unramified follows from Theorem~\ref{obs}.  To prove that
$\alpha$ is nonconstant, we explicitly compute that the discriminant
of the cubic surface fibration $Y \to \P^2$ is the union of the line
$\{x=0\}$ with multiplicity 4, the line $\{y=0\}$ with multiplicity 4,
the line $\{z=0\}$ with multiplicity 30, the pair of conjugate lines
$\{x^2+z^2=0\}$ with multiplicity 6, the smooth conic
$\{x^2-y^2+z^2=0\}$ with multiplicity 4, and the integral sextic
defined by
$$
x^6 + 3 x^4 y^2 + 3 x^2 y^4 + y^6 + x^4 z^2 - 8 y^4 z^2 + 16 y^2 z^4 +
20 x^2 y^2 z^2 = 0.
$$
%
For example, one could use Magma's {\tt ClebschSalmonInvariants}
command \cite{magma}. Then, over each component besides $\{z=0\}$,
one can check that the generic fiber is geometrically integral, while
over the component $\{z=0\}$, the fiber is the union of three planes
defined over the residue field of the line $\{z=0\}$, hence
Theorem~\ref{thm:LST} applies and no constant Brauer class is
unramified on $X$.
\end{rema}

\subsection{Birational transformation}
\label{subsec:birat}

In this section, we prove Proposition \ref{birtransform}.  We have the
following natural rational map $Y\dashrightarrow V$
\begin{equation}\label{1map}
(u:v:t:w, x:y:z)\mapsto (u:v:t, x:y:wz:z).
\end{equation}
This map is not defined on the locus $u=v=t=0$; on the complement of
this locus it is not an isomorphism along $z=0$.

We construct $Y'$ as a composition of two blow ups:
\begin{enumerate}
\item we consider the blow up $Y_1\to Y$ of the locus $u=v=t=z=0$,
\item we consider the blow up $Y'\to Y_1$ of the locus $u_1=v_1=t_1=0$ in the chart corresponding to the exceptional divisor defined by $z=0$ (see below).
\end{enumerate}

\subsection{First blow up}
We give equations of the blowup in each chart. Note that around the exceptional divisor we always have $w=1$, and $x=1$ or $y=1$.

\begin{enumerate}
\item In the chart $v=uv_1, t=ut_1, z=uz_1$, the exceptional divisor
is $u=0$, and the blowup is given by
\begin{equation*}
(1-v_1)(t_1-v_1)x^2+(1-v_1)t_1y^2+t_1w^2z_1^2+(t_1-v_1)F(1, t_1, v_1)u^2z_1^2=0.
\end{equation*}
We extend the map \eqref{1map} to the map $Y_1\dashrightarrow V$ defined by
\begin{equation}\label{2map}
(u,v_1, t_1, w\; ;\, x:y, z_1 )\mapsto (1:v_1:t_1, x:y:wz_1:uz_1),
\end{equation}
which is everywhere defined on this chart. Here, we mean that around
the exceptional divisor, we have $w=1$ and $x=1$ or $y=1$.  The fibers of this
map are either points or an $\A^1$ (if $uz_1=0$).  The fiber of
the map $Y_1\to Y$ over the point $(0:0:0:1, x_0:y_0:0)$ is given by
\begin{equation}\label{cubicfiber}
(1-v_1)(t_1-v_1)x_0^2+(1-v_1)t_1y_0^2+t_1z_1^2=0.
\end{equation}
This defines a (singular) cubic surface. By Lemma~\ref{cubicCHtr}
below, this cubic surface is universally $\CH_0$-trivial. We deduce
that the map $Y_1\to Y$ is universally $\CH_0$-trivial over this
chart.

\item In the chart $u=tu_1, v=tv_1, z=tz_1$, the exceptional divisor
is $t=0$, and the blowup is given by
\begin{equation*}
u_1(u_1-v_1)(1-v_1)x^2+u_1(u_1-v_1)y^2+w^2z_1^2+(1-v_1)F(u_1, 1, v_1)t^2z_1^2=0.
\end{equation*}
Similar as in the previous case, we define the map $Y_1\dashrightarrow V$ by
\begin{equation}\label{3map}
(u_1,v_1, t, w\; ;\, x:y, z_1 )\mapsto (u_1:v_1:1, x:y:wz_1:tz_1),
\end{equation}
which is everywhere defined on this chart.  The fibers of this map are
either points or an $\A^1$. On the intersection of the charts, the
maps \eqref{2map} and \eqref{3map} coincide.  The universal
$\CH_0$-triviality of the fibers of the map $Y_1\to Y$ follows from
Lemma~\ref{cubicCHtr}.
\item The chart with exceptional divisor $v=0$ is similar.
\item In the chart $u=zu_1, v=zv_1, t=zt_1$, the exceptional divisor
is $z=0$, and the blowup is given by
\begin{equation}\label{for2blp}
u_1(u_1-v_1)(t_1-v_1)x^2+u_1(u_1-v_1)t_1y^2+t_1w^2+(t_1-v_1)F(u_1, t_1, v_1)z^2=0.
\end{equation}
We define the map $Y_1\dashrightarrow V$ by
\begin{equation}\label{4map}
(u_1,v_1, t_1, w\; ;\, x:y, z )\mapsto (u_1:v_1:t_1, x:y:w:z),
\end{equation}
which is defined everywhere on this chart, except at the locus
$u_1=v_1=t_1=0$.  On the domain of definition, the fibers are points
or lines. The universal $\CH_0$-triviality of the fibers of the map
$Y_1\to Y$ follows from Lemma~\ref{cubicCHtr}.
\end{enumerate}

\begin{lemm}\label{cubicCHtr}
Let $k$ be a field, $a,b\in k$, and $S\subset \P^3_k$ be the cubic
surface defined by 
\begin{equation}\label{lastcubic}
au(u-v)(t-v)+bu(u-v)t+tz^2=0.
\end{equation}
If $a\neq 0$, then $S$ is rational.
\begin{itemize}
\item [\textit{(i)}] If $a\neq 0$ and $(a+b)\neq 0$, then $S$ has three  isolated singular points. The blowup $\tilde S\to S$ at the singular points is smooth and the exceptional divisors are smooth and rational.
\item [\textit{(ii)}] If $a=1, b=-1$, the cubic surface  \eqref{lastcubic} has a unique singular point and it has a universally $\CH_0$-trivial resolution $\tilde S\to S$, given by successive blow ups over this point.
\item [\textit{(iii)}]
If $a=0$, $b=1$, the cubic surface \eqref{lastcubic}
is a union of a plane $t=0$ and a rational quadric surface $u(u-v)+z^2=0$.
\end{itemize} 
\end{lemm}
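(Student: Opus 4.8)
The plan is to study the surface $S$ defined by \eqref{lastcubic} directly in coordinates, exploiting the heavy factorization present in the equation. First I would establish rationality when $a \neq 0$: since the cubic form
\[
au(u-v)(t-v)+bu(u-v)t+tz^2 = u(u-v)\bigl(a(t-v)+bt\bigr) + tz^2
\]
is visibly linear in $z^2$ up to the factor $t$, the plane section $\{u=0\}$ meets $S$ in $\{u=0,\ tz^2=0\}$, which contains the line $\{u=t=0\}$, and the plane $\{u=v\}$ meets $S$ in $\{u=v,\ tz^2=0\}$, again containing a line. Projecting $S$ away from the line $\{u=t=0\}$ (or equivalently solving the equation for the ratio of the remaining coordinates) exhibits $S$ as birational to $\P^2$ over $k$; concretely, the conic bundle obtained by projecting to the $\P^1$ with coordinates $(u:v)$ has generic fiber a smooth conic with a rational point (the point at which $t=0$ forces $u(u-v)(\cdots)=0$), so $S$ is rational over the rational base $\P^1$. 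I would then locate the singular locus by computing where all partials vanish; because of the factor structure the singularities will all lie over $t=0$ or on the locus $u=u-v=0$, and a direct Jacobian computation will pin them down to the claimed points.

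For part \textit{(i)}, assuming $a\neq 0$ and $a+b\neq 0$, I would show the singular locus consists of exactly three points — these come from the three ``corners'' $\{u=t=0\}$, $\{u=v=0\text{-type}\}$, and the point where $t=0$ meets $\{a(t-v)+bt=0\}$, which is nondegenerate precisely because $a+b\neq 0$. Each is an isolated ordinary double point (an $A_1$ singularity), as one checks by writing down the Hessian of the local equation; blowing up an ODP on a threefold—wait, here $S$ is a surface, so blowing up an $A_1$ point of a surface gives a smooth surface with a $(-2)$-curve exceptional divisor, which is a smooth rational curve. Hence $\tilde S \to S$ is smooth with smooth rational exceptional divisors, as claimed.

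For part \textit{(ii)}, with $a=1,\ b=-1$, the factor $a(t-v)+bt = -v$ collapses and the equation becomes $-uv(u-v)+tz^2=0$ — wait, let me recompute: $u(u-v)(t-v) - u(u-v)t + tz^2 = u(u-v)(-v) + tz^2 = -uv(u-v)+tz^2$. This surface has a single singular point (the Jacobian analysis shows the three points of case \textit{(i)} degenerate into one), and I would resolve it by a sequence of explicit blow ups over that point, checking in each chart—exactly as in the blow-up calculations already carried out in \S\ref{subsec:birat} and in the proof of Proposition~\ref{prop:resolution_HPT_section}—that the exceptional fibers are smooth rational curves or chains thereof, hence universally $\CH_0$-trivial, so that $\tilde S\to S$ is a universally $\CH_0$-trivial resolution by \cite[Prop.~1.8]{CTP}. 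For part \textit{(iii)}, with $a=0,\ b=1$ the equation is literally $u(u-v)t + tz^2 = t\bigl(u(u-v)+z^2\bigr)=0$, which is the union of the plane $\{t=0\}$ and the quadric $\{u(u-v)+z^2=0\}$; the latter is a smooth quadric surface (its discriminant is a nonzero constant) hence rational. The main obstacle is the careful bookkeeping in part \textit{(ii)}: verifying that the singularity is unique and that the chosen tower of blow ups resolves it with universally $\CH_0$-trivial fibers requires a chart-by-chart local computation, but it is entirely parallel to the resolutions performed earlier in the paper and presents no new difficulty of principle.
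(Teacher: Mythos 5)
Your overall plan (Jacobian computation to locate the singular points, then explicit blow-ups checked chart by chart) is the same as the paper's, but several of the specific justifications you give do not hold as written. The rationality argument is the clearest problem: projecting ``to the $\P^1$ with coordinates $(u:v)$'' means projecting from the line $\{u=v=0\}$, which is \emph{not} contained in $S$ (the equation restricts to $tz^2$ there), so the fibers are plane cubics, not conics. What works is projection from the line $\{u=t=0\}\subset S$, with base coordinates $(u:t)$, and then the rational point on the generic fiber is not supplied by your parenthetical remark but by an explicit section, e.g.\ the line $\{u=v,\ z=0\}\subset S$. The paper does something even simpler: $u=v=z=0$ is a double point of $S$ (for $a\neq 0$), and projection from it is birational to $\P^2$ over $k$. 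Relatedly, your heuristic location of the three singular points in \textit{(i)} is off: none of them lie on $\{u=t=0\}$ or on $\{t=0\}$; they are $u=v=z=0$ and the two points $z=0$, $av=(a+b)t$ with $u=0$, resp.\ $u=v$ (all with $t\neq 0$). The Jacobian computation you promise would of course correct this.

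The second genuine gap concerns the field of definition, which is the whole point of stating the lemma over an arbitrary $k$ (it is applied to fibers over non-closed residue fields). For an $A_1$ point over a non-closed field, blowing up yields a smooth \emph{conic} as exceptional divisor; it is rational only if it has a $k$-point, so ``ODP $\Rightarrow$ smooth rational exceptional curve'' is not automatic. One must check that the quadratic parts are isotropic over $k$ — they are, e.g.\ at $u=v=z=0$ in the chart $t=1$ the quadratic part is $(a+b)u^2-(a+b)uv+z^2$, which vanishes at $(u:v:z)=(0:1:0)$ — and this is exactly what the paper's explicit chart equations exhibit. The same issue governs part \textit{(ii)}, where universal $\CH_0$-triviality of $\tilde S\to S$ requires the exceptional fibers to be universally $\CH_0$-trivial over the residue field, so the chart computation you defer is precisely where the content lies: with $a=1$, $b=-1$ the point $u=v=z=0$ is a $D_4$-type singularity $z^2=uv(u-v)$ (in the chart $t=1$); one blow-up is not enough, as it leaves three further ordinary double points on the exceptional locus, which are then removed by a second round of blow-ups with (visibly) rational exceptional curves — this is the ``successive blow ups'' of the statement, and it is what the paper verifies. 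All of this is fixable by the computations you outline, but as written the proof has gaps exactly at the assertions the lemma is meant to certify (rationality over $k$ and universal $\CH_0$-triviality of the resolution).
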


Note that in the cases \textit{(i)} and \textit{(ii)}, since $\tilde
S$ is smooth and rational, it is universally $\CH_0$-trivial. The
lemma implies that the map $\tilde S\to S$ is universally
$\CH_0$-trivial, hence $S$ is universally $\CH_0$-trivial as well. In
part \textit{(iii)}, we have that $S$ is universally $\CH_0$-trivial
as well from its description.

\begin{proof}
The rational parameterization of $S$ is given by  projection from the point $z=u=v=0$.

Part \textit{(iii)} is straightforward. For part \textit{(i)}, by
direct computation we obtain the following description of the singular
locus:
\begin{itemize}
\item $z=u=v=0$;
\item $z=0, u=0, av-(a+b)t=0$;
\item $z=0, u-v=0, av-(a+b)t=0$.
\end{itemize}
We consider the exceptional divisor of the blowup of the first
point. The other cases are similar, up to a linear change of
variables. Put $c=a+b$. We write the equation of $S$, in the open
chart $t=1$, as
\begin{equation*}
u(u-v)(c-av)+z^2=0.
\end{equation*}
We then have the following charts for the blowup of $u=v=z=0$:
\begin{itemize}
\item In the chart $u=zu_1, v=zv_1$, the blowup
$u_1(u_1-v_1)(c-azv_1)+1=0$ is smooth, and the exceptional divisor
$u_1(u_1-v_1)c+1=0$ is smooth and rational.
\item In the chart $z=uz_1, v=uv_1$, the blowup
$(1-v_1)(c-auv_1)+z_1^2=0$ is smooth, and the exceptional divisor
$(1-v_1)c+z_1^2=0$ is smooth and rational.
\item In the chart $z=vz_1, u=vu_1$, the blowup
$u_1(u_1-1)(c-av)+z_1^2=0$ is smooth, and the exceptional divisor
$(1-u_1)c+z_1^2=0$ is smooth and rational.
\end{itemize}

In the part \textit{(ii)}, the unique singular point of $S$ is given
by $z=u=v=0$. We have the same equations as above, with $c=0, a=1$. We
have additional double point singularities, in the second and the
third chart (for example, the point $z_1=u=v_1=0$ in the second chart
$z_1^2-uv_1(1-v_1)=0$) which are resolved by one single blowup, and
then the exceptional divisor is smooth and rational.
\end{proof}

\subsection{Second blowup}
We consider the chart \eqref{for2blp} and we blow up the locus $u_1=v_1=t_1=0$:
\begin{itemize}
\item In the chart $v_1=u_1v_2, t_1=u_1t_2$, the blowup is given by
\begin{equation*}
u_1^2(1-v_2)(t_2-v_2)x^2+u_1^2(1-v_2)t_2y^2+t_2w^2+(t_2-v_2)F(1, v_2, t_2)u_1^2z_1^2=0.
\end{equation*}
We consider the map $Y'\to V$ defined by
\begin{equation}\label{5map}
(u_1,v_2, t_2, w\; ;\, x:y, z_1 )\mapsto (1:v_2:t_2, u_1x:u_1y:w:u_1z_1),
\end{equation}
which is everywhere defined ($w\neq 0$). The fibers of the map are
points or affine spaces. The fibers of the blow up $Y'\to Y_1$ on this
chart are also either points or affine spaces.
\item The analysis of the other two charts is similar.
\end{itemize}

\section{Appendix: Analysis of singularities}

The main goal of this section is to prove the following result.

\begin{theo}\label{resolution}
Let $X_n$ be as defined in \eqref{eproj}. Then $X_n$ admits a
universally $\CH_0$-trivial resolution $\tilde X_n \to X_n$.
\end{theo}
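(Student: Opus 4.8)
The plan is to work Zariski-locally on the base $\P^2$, reducing everything to explicit affine blow-up computations exactly as in the proof of Proposition~\ref{prop:resolution_HPT_section} and in \S\ref{subsec:birat}. First I would stratify the base $\P^2$ according to the vanishing of the linear forms $\ell_1,\dots,\ell_m$ and the conic $F$. Over a point $p \in \P^2$ lying on none of the lines $L_i$ and not on $C$, the fiber of $X_n \to \P^2$ is a smooth quadric surface, so $X_n$ is smooth there. Since $a,b,c,d$ are square-free and pairwise distinct (condition~\eqref{conditions}), at a general point of a single line $L_i$ exactly one of $a,b,c,dF$ vanishes to order one, so the quadric degenerates to one of rank $3$; at a general point of $C$ the form $F$ vanishes to order one and again exactly one coefficient drops. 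At the finitely many special points — intersections $L_i \cap L_j$, intersections $L_i \cap C$ (which are tangency points, so $C$ and $L_i$ meet with multiplicity $2$), and the point where more than two lines could meet (excluded for a regular $m$-gon with $m\geq 3$, since only two sides meet at each vertex) — two of the defining forms vanish simultaneously.

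Next I would write down analytic-local normal forms for $X_n$ at each stratum, using a complete regular local ring $k[[s_0,s_1]]$ on the base and the standard diagonalization of a quadratic form. Away from the singular locus nothing is needed. Over a generic point of $L_i$ (resp.\ of $C$) the threefold $X_n$ has, after the change of variables absorbing the nonvanishing coefficients into the variables, a local model of the shape $x^2 + y^2 + z^2 + s_0^{2}\,w^2 = 0$ or, more precisely, $s_0\cdot(\text{unit})$ times one variable squared plus three others, whose singular locus is a smooth surface; one blow-up of that surface resolves it with fibers that are smooth conics or $\P^1$'s, hence universally $\CH_0$-trivial. This is the quadric-bundle analogue of the ``one double line'' case treated after Proposition~\ref{prop:resolution_HPT_section}. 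At a node $L_i \cap L_j$ two coefficients vanish transversally and the local model is $x^2+y^2+s_0^2 z^2 + s_1^2 w^2 = 0$ (up to units); I would resolve this by blowing up one of the two codimension-$2$ singular strata and then the strict transform of the other, checking in explicit charts — just as in parts 1) and 2) of the proof of Proposition~\ref{prop:resolution_HPT_section} — that the exceptional fibers are smooth rational curves or chains of at most three $\P^1$'s, hence universally $\CH_0$-trivial, and invoking \cite[Prop.~1.8]{CTP}. The tangency points $L_i \cap C$ are the genuinely new feature: there $F$ vanishes to order $2$ along a branch transverse to $L_i$, so the factor $dF$ contributes $\ell_i \cdot (\text{unit}) \cdot F$, and the local equation looks like $x^2 + y^2 + s_0^2 z^2 + s_0 s_1^2\,w^2 \cdot(\text{unit}) = 0$ after suitable coordinates; here I would perform the blow-ups dictated by the singular strata (first the line over $L_i$, then the residual locus) and verify chart-by-chart that one obtains a smooth total space with universally $\CH_0$-trivial fibers — this mirrors the two-blow-up argument of \S\ref{subsec:birat}, and Lemma~\ref{cubicCHtr} is not needed since the fibers here are quadrics and chains of lines rather than cubic surfaces.

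Finally, I would glue: the blow-up centers constructed locally over the different strata are disjoint (they lie over disjoint loci of $\P^2$ except along the singular curves, where the descriptions agree), so they patch to a global sequence of blow-ups $\tilde X_n \to X_n$ with $\tilde X_n$ smooth; all scheme-theoretic fibers of $\tilde X_n \to X_n$ are either points, smooth rational conics, or chains of finitely many $\P^1$'s, hence universally $\CH_0$-trivial, and \cite[Prop.~1.8]{CTP} gives that $\tilde X_n \to X_n$ is universally $\CH_0$-trivial. The main obstacle I anticipate is the bookkeeping at the tangency points $L_i \cap C$: one must track the correct order of vanishing of $F$ along each branch and choose the blow-up centers and their order so that the resolution terminates with smooth charts and controlled fibers; once the right local normal form $x^2+y^2+s_0^2z^2+s_0s_1^2 w^2=0$ (or its variant with the roles of the coefficients permuted) is pinned down, the chart computations are routine and parallel to those already carried out in \S\ref{sec:23} and \S\ref{subsec:birat}.
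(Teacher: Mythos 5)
There is a genuine gap, and it originates in a misreading of the conditions \eqref{conditions}. Since $abcd=(\ell_1\dotsm\ell_m)^2$ with $a,b,c,d$ square free, each linear form $\ell_i$ divides exactly \emph{two} of $a,b,c,d$; hence at a general point of a line $L_i$ exactly two of the coefficients $a,b,c,dF$ vanish (each to order one), and the fiber quadric drops to rank $2$, not rank $3$ as you assert. This is not a cosmetic point: with a rank-$3$ degeneration the total space would be smooth over the generic points of the lines (and the construction would lose the very feature that produces the unramified class in Theorem~\ref{obs}), whereas in reality $X_n$ is singular along a curve $C_i$ over each line $L_i$, cf.\ \eqref{sconic} and \eqref{sconic2}, with formal model $x(s^2+t^2)+u^2+v^2=0$ as in \eqref{analyticsing1}. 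Because of this miscount, all of your proposed local normal forms are wrong: the models $x^2+y^2+s_0^2z^2+s_1^2w^2=0$ at a vertex and $x^2+y^2+s_0^2z^2+s_0s_1^2w^2=0$ at a tangency point have coefficients vanishing to order two, which cannot occur here by square-freeness (those are the normal forms for the \emph{conic} bundle of \S\ref{sec:23}, not for $X_n$). The actual vertex models depend on whether two, three, or all four of $a,b,c,d$ vanish at $L_i\cap L_j$, giving \eqref{analyticsing2}, \eqref{analyticsing3}, and \eqref{analyticsing4}, and the tangency-point models depend on whether $\ell_i$ divides $d$, giving $xs^2+t^2+u^2+xFv^2=0$ or $xs^2+xt^2+u^2+Fv^2=0$; the case distinction $\ell_i\mid d$ versus $\ell_i\nmid d$, which governs both the shape of the singular curve over $L_i$ and the extra singular point $P_i$ of \eqref{singularpointplus}, is entirely absent from your plan.

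Consequently the resolution you propose does not address the singularities that $X_n$ actually has. The paper's proof consists precisely of the case analysis above, together with a \emph{globally specified} sequence of blow-ups \eqref{eresolution} (first the lines \eqref{singularlines} over the vertices, then the points $P_i$ and their exceptional divisors, then the proper transforms of $C_1,\dotsc,C_m$ in order, then the lines \eqref{singularlines2}, then the curves $D_i$ of \eqref{conictang}), verified chart by chart in formal local coordinates and by comparison with the local computations of \cite{HPT} and \cite{HPT-double}. Your final gluing step would also fail as stated: the centers are not disjoint, since the curves $C_i$ pass through the fibers over the vertices and over the tangency points, so the local descriptions overlap and the order of the blow-ups matters; working formally locally is legitimate (as the paper explains via \cite[Prop.~1.8]{CTP}), but only once the global centers and their order have been fixed and the correct local models identified.
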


The following lemma is known:

\begin{lemm}\label{formal}
Let $X$ be a proper variety over a field $k$ of characteristic
zero. If $X$ admits a universally $\CH_0$-trivial resolution $\tilde
X\to X$, then any resolution $X'\to X$ is universally $\CH_0$-trivial.
\end{lemm}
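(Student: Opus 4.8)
The plan is to deduce Lemma~\ref{formal} from two standard facts: the behavior of universal $\CH_0$-triviality under composition of proper birational maps of smooth varieties (which follows from weak factorization together with the fact that a blow-up of a smooth variety along a smooth center is universally $\CH_0$-trivial), and the ``two-out-of-three'' property for universal $\CH_0$-triviality in a commuting triangle. Concretely, suppose $\tilde X \to X$ is a universally $\CH_0$-trivial resolution and $X' \to X$ is an arbitrary resolution; I want to show $X' \to X$ is universally $\CH_0$-trivial.

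First I would form a smooth variety $W$ dominating both $\tilde X$ and $X'$ and birational to them over $X$: for instance resolve the closure of the graph of the birational map $\tilde X \dashrightarrow X'$, or apply Hironaka to obtain $W \to \tilde X$ and $W \to X'$ proper birational morphisms of smooth $k$-varieties fitting into a commutative diagram over $X$. Then by weak factorization each of the morphisms $W \to \tilde X$ and $W \to X'$ factors as a sequence of blow-ups and blow-downs along smooth centers between smooth varieties; since blow-up along a smooth center is universally $\CH_0$-trivial (the fibers are projective spaces, hence universally $\CH_0$-trivial, and one invokes \cite[Prop.~1.8]{CTP} or the analogous statement), and since a composition of universally $\CH_0$-trivial proper morphisms is universally $\CH_0$-trivial, both $W \to \tilde X$ and $W \to X'$ are universally $\CH_0$-trivial.

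Now I would run the two-out-of-three argument. In the triangle
$$
\xymatrix@R=14pt{
W \ar[d] \ar[dr] & \\
\tilde X \ar[d] & X' \ar[dl] \\
X &
}
$$
we know $W \to \tilde X$ is universally $\CH_0$-trivial and $\tilde X \to X$ is universally $\CH_0$-trivial, so the composite $W \to X$ is universally $\CH_0$-trivial. On the other hand $W \to X$ also factors through $X'$, so for every field extension $k'/k$ the pushforward $\CH_0(W_{k'}) \to \CH_0(X_{k'})$ factors as $\CH_0(W_{k'}) \twoheadrightarrow \CH_0(X'_{k'}) \to \CH_0(X_{k'})$, where the first map is surjective because $W \to X'$ is universally $\CH_0$-trivial. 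Since the composite is an isomorphism, the map $\CH_0(X'_{k'}) \to \CH_0(X_{k'})$ is surjective; and it is injective because $\CH_0(W_{k'}) \to \CH_0(X'_{k'})$ is an isomorphism while the composite $\CH_0(W_{k'})\to\CH_0(X_{k'})$ is injective. Hence $X' \to X$ is universally $\CH_0$-trivial.

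The only mild subtlety — the main obstacle, such as it is — is ensuring the existence of the smooth common dominant $W$ with birational morphisms to both resolutions fitting in a diagram over $X$; this is exactly where characteristic zero is used (resolution of singularities and weak factorization), and it is the reason the lemma is stated in characteristic zero. Everything else is formal manipulation of the functorial pushforward on $\CH_0$ under field extension, already recalled in the paper. I would therefore present the argument in the order above: (1) construct $W$; (2) invoke weak factorization and the blow-up computation to see $W \to \tilde X$ and $W \to X'$ are universally $\CH_0$-trivial; (3) conclude by the composition and two-out-of-three properties.
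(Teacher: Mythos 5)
Your proposal is correct and follows essentially the same route as the paper: construct a common smooth model $W$ dominating both resolutions (resolution of singularities / graph closure), use weak factorization plus the smooth-center blow-up fact to see that the maps from $W$ to each resolution are universally $\CH_0$-trivial, and conclude by composition. The paper's own proof is just a terser version of this, leaving the final two-out-of-three bookkeeping on $\CH_0$ implicit, so there is nothing substantive to add.
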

\begin{proof}
We note that by resolution of singularities, there is a smooth
projective variety $\tilde X'$ together with birational morphisms
$\tilde X'\to X'$ and $\tilde X'\to \tilde X$. Then, it is enough to
observe that any birational morphism of smooth projective varieties
over $k$ is universally $\CH_0$-trivial.  For this, we use weak
factorization, the fact that a blowup with smooth center is a
universally $\CH_0$-trivial morphism, and the fact that, by definition,
a composition of two universally $\CH_0$-trivial maps is universally
$\CH_0$-trivial.
\end{proof}

By \cite[Prop.~1.8]{CTP} we have the following criterion: a proper
morphism $\tilde X\rightarrow X$ is universally $\CH_0$-trivial if for
any scheme-theoretic point $P \in X$, the fiber $\tilde{X}_P$ is a
universally $\CH_0$-trivial variety over the residue field $\kappa(P)$.
Using this criterion, in order to prove that a sequence of blowups of
a variety $X$ provides a universally $\CH_0$-trivial resolution
$f:\tilde X\to X$, it is enough to work formally locally on $X$.
Indeed, if $\hat{\OO}_{X,P}$ is the completion of the local
ring ${\OO}_{X,P}$, the fiber of the induced map $\tilde
X\times_X \mathrm{Spec}\,\hat{\OO}_{X,P}\to
\mathrm{Spec}\,\hat{\OO}_{X,P}$ at the closed point of
$\mathrm{Spec}\,\hat{\OO}_{X,P}$ is $\tilde X_P$.

We now analyze different types of singularities that could appear for $X_n$.

\subsection{Equations defining singular locus}
By symmetry, we may work over an open chart $z\neq 0$ of $\P^2$. Then
we have the following equations defining the singular locus:
\begin{multline}\label{eqsingular}
as=bt=cu=dFv=0,\\
\frac{\partial a}{\partial x}s^2+\frac{\partial b}{\partial x}t^2+\frac{\partial c}{\partial x}u^2+\frac{\partial dF}{\partial x}v^2=0,\;
\frac{\partial a}{\partial y}s^2+\frac{\partial b}{\partial y}t^2+\frac{\partial c}{\partial y}u^2+\frac{\partial dF}{\partial y}v^2=0.
\end{multline}

Note that if $P=((x,y),[s:t:u:v])\in X_n$ satisfies that $(x,y)\notin    D:=L_1\cup\dotsm\cup L_m\cup C$, then $P$ is a smooth point. Indeed, we then have $abcdF(P)\neq 0$ and the conditions above 
imply that 
$$
s=t=u=v=0,
$$ 
which is not possible since $s,t,u,v$ are projective coordinates.

Also, if $P\in C$, but not on any line $L_1, \dotsc, L_m$, the conditions \eqref{eqsingular} imply that 
$$
s=t=u=0, \quad v=1, \quad F=0, \quad \frac{\partial dF}{\partial x}(P)=\frac{\partial dF}{\partial y}(P)=0,
$$ 
which is impossible since the conic $C$ is smooth.

Hence we need to analyze the following four types of singularities:
\begin{itemize}
\item over  the generic point of lines $L_i$;
\item over the intersection points  $L_i\cap L_j$;
\item over the tangency point of $C$ and $L_i$;
\item over closed points of lines $L_i$, that are not on other lines or on the conic $C$.
\end{itemize}

Note that by \cite{HPT, HPT-double}, a universally $\CH_0$-trivial resolution exists in the following cases:
\begin{align}\label{eqinHPT}
\begin{split}
&a=yz, b=xy, c=xz, d=1;\\
&a=1,b=xy, c=xz, d=yz.
\end{split}
\end{align}

In the arguments below, up to a linear change of variables $x$ and $y$, we may assume that $\ell_i=x$ and $\ell_j=y$. 

The analysis below provide the following global description of singularities:
\begin{enumerate}
\item the curves $C_i$  \eqref{sconic}, \eqref{sconic2} over the lines $L_i$, some of these curves are singular at a point $P_i$  \eqref{singularpointplus} over the tangency point of $C$ and $L_i$;
\item  singular lines \eqref{singularlines} and \eqref{singularlines2},  over the intersection points of $L_i$ and $L_j$;
\item the curves $D_i$ \eqref{conictang} over the tangency points of $C$ et $L_i$.
\end{enumerate}

We consider the map 
\begin{equation}\label{eresolution}
 X_n'\to X_n
\end{equation} 
given by  successive blow ups of the singular locus in the following order:
\begin{itemize}
\item we blow up lines \eqref{singularlines};
\item we blow up the points $P_i$ and then we blow up the exceptional divisors over $P_i$,
\item we blow up of the proper transform of $C_1$, the proper transform of $C_2, \dotsc$ and then proper transform of $C_m$, 
\item  we blow up 
successively the proper transforms of lines \eqref{singularlines2},
\item  finally, we blow up the proper transforms of the curves $D_i$.
\end{itemize}

We claim that the only singularities of the variety $X_n'$   are  over some intersection points of $L_i$ and $L_j$, the blowup $\tilde X_n$  of these singularities is smooth, and the resulting map $\tilde X_n\to X_n'\to X_n$ is a universally $\CH_0$-trivial resolution.

\subsection{Singularities over   lines $L_i$}
We have two cases to consider:
\begin{enumerate}
\item $\ell_i=x$ divides precisely two among the coefficients $a,b,c$, by symmetry we may assume that $x\,|a,b$, we then write $a=xa_1, b=xb_1$;
\item $x$ divides $d$, by symmetry, we may assume that $x\,|a,d$ and we write $d=xd_1, a=xa_1$.
\end{enumerate}

 Then the analysis of the singular locus in each case is as follows:
\begin{enumerate}
\item The equations \eqref{eqsingular} imply $u=v=0, a_1s^2+b_1t^2=0$. Let $\lambda$ be the common factor of $a_1$ and $b_1$: $\lambda$ is the product of (some of) lines $\ell_1, \dotsc, \ell_m$ and $a_1=\lambda a_2, b_1=\lambda b_2$. We obtain a curve $C_i\subset X_n^{sing}$ in the singular locus $X_n^{sing}$ of $X_n$:
\begin{equation}\label{sconic}
x=0, u=v=0, a_2s^2+b_2 t^2=0.
\end{equation}

 We claim that the blowup  of the curve $C_i$ is smooth at any point of the exceptional divisor that is not over a point of $C$ or a point on another line, and that the corresponding fibers  are universally $\CH_0$-trivial.

For the fibers  over a point $P\in X_n$ lying over a closed point $Q\in L_i$, we work  over the  local ring $\hat{\mathcal O}_{X_n, P}$. Since the residue field of $Q$ is the field of complex numbers, any element of $\hat{\mathcal O}_{\P^2, Q}$, which does not vanish at $Q$, is a square in $\hat{\mathcal O}_{\P^2, Q}$, and hence in $\hat{\mathcal O}_{X_n, P}$. We then obtain the following formal equation:
\begin{equation}\label{analyticsing1}
x s^2+x t^2+u^2+v^2=0,
\end{equation}
 the singularity is defined by $u=v=0, x=0, s^2+t^2=0$. This type of singularity has been already treated in \cite{HPT} (compare with equations \eqref{eqinHPT}), it is resolved with one blow up, and the corresponding fiber is universally $\CH_0$-trivial.

For the fiber over the generic point of $C_i$, it is enough to consider the following charts of the blowup:
\begin{enumerate}
\item 
$a_2s^2+b_2t^2=xw, u=x u_1, v=xv_1.$

The blowup is given by the conditions
\begin{equation*}
\lambda w+cu_1^2+dFv_1^2=0, a_2s^2+b_2t^2=xw
\end{equation*}
and the exceptional divisor is defined by
\begin{equation*}
x=0, \lambda w+cu_1^2+Fdv_1^2=0,a_2s^2+b_2t^2=0
\end{equation*}
This variety is smooth and rational over the generic point of $C_i$ since $\frac{b_2}{a_2}$ is a square at the generic point of $C_i$.
\item $a_2s^2+b_2t^2=uw, x=ux_1, v=uv_1.$ 
 
The blowup is given by the conditions
\begin{equation*}
\lambda x_1w+c+dFv_1^2=0,  a_2s^2+b_2t^2=uw,
\end{equation*}
it is smooth since $a_2, b_2, \lambda$ and $c$ do not vanish at the generic point of $L_i$ and $s\neq 0$ or $t\neq 0$.  The exceptional divisor is also smooth and rational, defined by $u=0$.

The chart with the exceptional divisor defined by $v=0$ is similar.
\item $x=(a_2s^2+b_2t^2)x_1,  u=(a_2s^2+b_2t^2)u_1, v=(a_2s^2+b_2t^2)v_1.$ 

The blowup is given by the condition
\begin{equation*}
\lambda x_1+cu_1^2+dFv_1^2=0,
\end{equation*}
it is smooth. The exceptional divisor is  smooth and rational, defined by
\begin{equation*}
\lambda x_1+cu^2+dFv^2=0, a_2s^2+b_2t^2=0.
\end{equation*}

\end{enumerate}
\item Similarly as in the previous case, let $\lambda$ be the common factor of $a_1$ and $d_1$ and $a_1=\lambda a_2, d_1=\lambda d_2$. We obtain the equation of the singular locus:
\begin{equation}\label{sconic2}
x=0, t=u=0, a_2s^2+d_2Fv^2=0.
\end{equation}
The formal equation at closed points are the same as the equations in the previous case \eqref{analyticsing1}. For the generic fiber, we consider the following chart of the blowup \begin{equation*}
a_2s^2+d_2Fv^2=xw, t=x t_1, u=xu_1.
\end{equation*}
The blowup is given by the conditions
\begin{equation*}
\lambda w+bt_1^2+cu_1^2=0, a_2s^2+d_2Fv^2=xw
\end{equation*}
and the exceptional divisor is defined by
\begin{equation*}
x=0, \lambda w+bt_1^2+cu_1^2=0, a_2s^2+d_2Fv^2=0
\end{equation*}
This variety is smooth and rational over the generic point of $C_i$ since $\frac{d_2F}{a_2}$ is a square at the generic point of $C_i$.

The analysis of the other charts is similar to the previous case.
\end{enumerate}

\subsection{Singularities over intersection points  $L_i\cap L_j$}
Let $Q$ be the intersection point of $L_i$ and $L_j$ and let $P$ be a singular point of $X_n$ over $Q$. 
We have the following cases to consider:
\begin{enumerate}
\item Only two coefficients among $a,b,c,d$ vanish at $Q$.
We then have the following possibilities (up to a symmetry):
$xy|a,b$  or $xy|a,d$ and the corresponding singular lines are given by the (global) conditions
\begin{equation}\label{singularlines}
M_{ij}: x=y=u=v=0 \mbox{ or }x=y=t=u=0.
\end{equation}

Again, working formally locally, we may assume that any function that does not vanish at $Q$ is a square. Hence,  in all cases, up to a symmetry,  we have the following type of local equation:
\begin{equation}\label{analyticsing2}
xy s^2+xy t^2+u^2+v^2=0
\end{equation}
and the singularity is given by $x=y=u=v=0$.  Also we can change variables $s^2+t^2=s_1t_1$ and consider the chart $t_1=1$, so that we have the following local equation:
\begin{equation*}
xys_1+u^2+v^2=0.
\end{equation*}
The map $X_n'\to X_n$, restricted to $\hat{\mathcal O}_{X_n, P}$ is the following composition:
\begin{itemize}
\item  blow up of the line $x=y=u=v=0$;
\item  blow up of the proper transform of $x=s_1=u=v=0$;
\item  blow up of the proper transform of $y=s_1=u=v=0$.
\end{itemize}

By symmetry between $u$ and $v$ we consider the following charts of the first blow up:
\begin{enumerate}
\item $x=yx_1, u=yu_1, v=yv_1$, the equation of the blowup is
$x_1s_1+u_1^2+v_1^2=0$ and the exceptional divisor is given by
$y=0$. This blow up map is universally $\CH_0$-trivial over this
chart. Next we blow up the locus $x_1=s_1=u_1=v_1=0$, corresponding to
the product of a line $y$ and the ordinary double point singularity,
hence the second blowup is smooth, the fibers are universally
$\CH_0$-trivial. By smoothness, the third blow up is universally
$\CH_0$-trivial over this chart.
\item $y=xy_1, u=xu_1, v=xv_1$, the equation of the blowup is $y_1s_1+u_1^2+v_1^2=0$ and the exceptional divisor is given by $x=0$.  Next we blow up the locus $x=s_1=u_1=v_1=0$. We consider the following charts (again, using symmetry between $u_1$ and $v_1$):
\begin{enumerate}
\item $s_1=xs_2, u_1=xu_2, v_2=xv_2$, the equation of the blowup is  $y_1s_2+x(u_2^2+v_2^2)=0$, the exceptional divisor is $x=0$. The fibers of the second blow up are universally $\CH_0$-trivial. Next we blow up the locus $y_1=s_2=u_2=v_2=0$. The charts corresponding to the exceptional divisors $y_1=0$ and $s_2=0$ are smooth, and the fibers are universally $\CH_0$-trivial:
\begin{itemize}
\item  $s_2=y_1s_3, u_2=y_1u_3, v_2=y_1v_3$, the blowup is given by $s_3+x(u_3^2+v_3^2)=0$; 
\item  $y_1=s_2y_3, u_2=s_2u_3, v_2=s_2v_3$, the blowup is given by $y_3+x(u_3^2+v_3^2)=0$; 
\end{itemize}
In the chart $y_1=u_2y_3, s_2=u_2y_3, v_2=u_2v_3$  we have the following equation: $y_3s_3+x(1+v_3^2)=0$, the ordinary double singularities $v_3=\pm i, x=y_3=s_3=0$ are resolved after one blowup, and the exceptional divisor is rational.
\item $x=s_1x_2, u_1=s_1u_2, v_2=s_1v_2$, the equation of the blowup is  $y_1+s_1(u_2^2+v_2^2)=0, x=s_1x_2$, the exceptional divisor is $s_1=0$, the fibers are universally $\CH_0$-trivial. The blowup is smooth, hence the third blow up is universally $\CH_0$-trivial over this chart.
\item $x=u_1x_2, s=u_1s_2, v_2=u_1v_2$, the equation of the blowup is $y_1s_2+u_1+u_1v_2^2=0, x=u_1x_2$,  the exceptional divisor is given by $u_1=0$. Next we blow up $y_1=s_2=u_1=v_2=0$. Note that this chart is smooth along this locus, hence the third blow up is universally $\CH_0$-trivial in this chart. The remaining singularity $y_1=s_2=u_1=0, v_2=\pm i$ is resolved as at the end of the case (i) above.

\end{enumerate}
\item $x=ux_1, y=uy_1, v=uv_1$, the equation of the blow up is
$x_1y_1s_1+1+v_1^2=0$, it is smooth and the exceptional divisor is
given by $u=0$. This blow up map is universally $\CH_0$-trivial in
this chart. Since the first blow up is smooth over this chart, the second and third blow ups are universally $\CH_0$-trivial.
\end{enumerate}

\item Only three coefficients among $a,b,c,d$ vanish at $Q$.
Then, we may assume that $xy\,|a, x\,|b, y\,|c$. The case when $x$ or $y$ divides $d$ is similar.
The singular locus over $x=y=0$ is given by $t=u=v=0$, hence, it is the point of  the intersection of the curves $C_i$ and $C_j$. The restriction of the map $X_n'\to X_n$  to $\hat{\mathcal O}_{X_n, P}$ is the composition of the blow up of $C_i$ and the proper transform of $C_j$.
Note that over the point $P$ we have the following formal equation:
\begin{equation}\label{analyticsing3}
xy s^2+x t^2+y u^2+v^2=0
\end{equation}
The same type of formal equation correspond to the case  considered in \cite{HPT} (compare with equations \eqref{eqinHPT}), where it is showed that the two blow ups as above provide  a universally $\CH_0$-trivial resolution.

\item All coefficients $a,b,c,d$ vanish at $Q$. We then assume $a=xa_1, b=xb_1, c=yc_1, d=yd_1$ and we have the following equation for $X_n$
\begin{equation*}
xa_1s^2+xb_1t^2+yc_1u^2+yd_1Fv^2=0
\end{equation*}
and the expression of the singular locus
\begin{equation}\label{singularlines2}
x=y=0, a_1s^2+b_1t^2=0, c_1u^2+d_1Fv^2=0
\end{equation}
Since $a_1, b_1, c_1, d_1F$ evaluated at $Q$ are nontrivial constants, this singular locus is the union of four lines, and the points with coordinates $s=t=0$ or $u=v=0$ correspond to the intersection points of these lines with the curves $C_i$ and $C_j$.

 We now describe the the formal equation.  Changing  variables, we may replace $a_1s^2+b_1t^2$ by $s_1t_1$ and $c_1u^2+d_1Ft^2$ by $u_1v_1$, and, by symmetry, we may consider the affine chart $t_1=1$. We then have the following formal equation
\begin{equation}\label{analyticsing4}
xs_1+yu_1v_1=0
\end{equation}
The singular locus is the union of two lines $x=y=s_1=u_1=0$ et $x=y=s_1=v_1=0$.
The restriction of the map $\tilde X_n\to X_n$ is as follows:
\begin{enumerate}
\item blow up of the locus $x=s_1=u_1=v_1=0$ (we separate two lines);
\item blow up of the strict transform of the lines $x=y=s_1=u_1=0$ and $x=y=s_1=v_1=0$.
\end{enumerate}
(Note that only one of the blowups of $C_i$ and $C_j$ is not an isomorphism for the case $t_1=1$ we consider). 
By symmetry, we have the following charts of the first blowup to consider: 
\begin{enumerate}
\item $s=xs_2, u=xu_2, v=xv_2$, the equation of the blowup is $s_2+yu_2v_2=0$,  which is smooth, the exceptional divisor is $x=0$, it is smooth and rational; by smoothness, the second blowup is universally $\CH_0$-trivial over this chart; 
\item $s=u_1s_2, x=u_1x_2, v=u_1v_2$, the equation of the blowup is $x_2s_2+yv_2=0$, the exceptional divisor is rational, given by $u_1=0$; the singular locus $x_2=s_2=y=v_2=0$ is resolved by the second blowup, and the exceptional divisor is rational.
\end{enumerate}
From the equations above, the fibers of the map $X_n'\to X_n$ at $P$ are universally $\CH_0$-trivial over this chart.

\end{enumerate}

\subsection{Singularities over the tangency point of $C$ and $L_i$}
Assume $C$ is tangent to $L_i$ at the point $Q$. We have the following cases:
\begin{enumerate}
\item $d$ vanishes at $Q$. By symmetry, we assume $x|a$ and write $a=xa_1, d=xd_1$. Then the conditions  \eqref{eqsingular} imply 
\begin{equation*}
t=u=0,\, a_1s^2+d_1F(Q)v^2=0,
\end{equation*} hence  we obtain a point 
\begin{equation}\label{singularpointplus}
P_i: s=t=u=x=y-1=0
\end{equation} 
 on the curve $C_i$ in  the case \eqref{sconic2}.
The local form  is as follows:
\begin{equation}
xs^2+t^2+u^2+xFv^2=0, \mbox{ singular locus: }s=t=u=x=y-1=0
\end{equation} 
This is the same type of formal equation as for the quadric bundle we considered in \cite{HPT-double} (see equations \eqref{eqinHPT}), by {\it loc. cit.}, the map \eqref{eresolution} provide  a universally $\CH_0$-trivial resolution: we  blow up successively the point $P_i$,  then the exceptional divisor of the blowup and the proper transform of $C_i$.

\item $d$ does not vanish at $Q$. We may then assume $a=xa_1, b=xb_1$, so that the conditions  \eqref{eqsingular} imply 
\begin{equation}\label{conictang}
D_i: u=0, a_1s^2+b_1t^2+d \frac{\partial F}{\partial x}(Q)v^2=0. 
\end{equation}

Arguing as in the previous cases, we obtain the following formal local form of the singular locus:

\begin{align}\label{analyticsing6}
\begin{split}
& x s^2+ x t^2+u^2+ F v^2=0, 
\mbox{ singular locus: }  u=0, s^2+t^2+\frac{\partial F}{\partial x}(Q)v^2=0.
\end{split}
\end{align}
This is the same type of formal equation as for the quadric bundle we considered in \cite{HPT} (see equations \eqref{eqinHPT}), by {\it loc. cit.}, the map \eqref{eresolution} provide  a universally $\CH_0$-trivial resolution.
\end{enumerate} 

This finishes the proof of Theorem \ref{resolution}.

\bibliographystyle{plain}
\bibliography{bidegree23}
\end{document}